\theoremstyle{plain}
\newtheorem{theorem}{Theorem}[section]
\newtheorem{proposition}[theorem]{Proposition}
\newtheorem{lemma}[theorem]{Lemma}
\newtheorem{maincorollary}{Corollary}
\theoremstyle{definition}
\newtheorem{remark}[theorem]{Remark}
\newcommand{\vep}{\varepsilon}
\newcommand{\la} {\lambda}
\newcommand{\cF}{{\mathcal F}}
\newcommand{\cG}{{\mathcal G}}
\newcommand{\cU}{{\mathcal U}}
\newcommand{\cW}{{\mathcal W}}
\newcommand{\N}{\mathbb{Z}_+}
\newcommand{\diam}{\operatorname{{diam}}}
\title[Stability and limit theorems for compositions of hyperbolic dynamical systems]
\begin{document}

\author{A. Castro and F. Rodrigues and P. Varandas}

\address{Armando Castro $\&$ Paulo Varandas, Departamento de Matem\'atica, Universidade Federal da Bahia\\
  Av. Ademar de Barros s/n, 40170-110 Salvador, Brazil.}
\email{aacastro@ufba.br}
\email{paulo.varandas@ufba.br}

\address{Fagner B. Rodrigues, Departamento de Matem\'atica, Universidade Federal do Rio Grande do Sul, Brazil. \& CMUP, University of Porto, Portugal}
\email{fagnerbernardini@gmail.com}

\begin{abstract}
In this paper we obtain an almost sure invariance principle for
convergent sequences of either Anosov diffeomorphisms
or expanding maps on compact Riemannian manifolds and prove an ergodic stability result for such sequences.
The sequences of maps need not correspond to typical points of a
random dynamical system.
The methods in the proof rely on the stability of compositions of hyperbolic dynamical systems. We introduce
the notion of sequential conjugacies
and prove that these vary in a Lipschitz way with respect to the generating sequences of
dynamical systems.
As a consequence, we prove stability results for time-dependent expanding maps that complement
results in~\cite{Franks74} on time-dependent Anosov diffeomorphisms.
 \end{abstract}

\keywords{Sequential dynamical systems, sequential conjugacies, quasi-conjugacies,
limit theorems, topological stability}

\subjclass[2000]{
Primary: 37B05, %transformations and group actions with special properties
37C50, %Approximate trajectories (pseudotrajectories, shadowing, etc.)
Secondary: 37D20 %Uniformly hyperbolic systems (expanding, Anosov, Axiom A, etc.)
37B40 % topological entropy
}

\date{\today}
\maketitle

%%%%%%%%%%%%%%%%%%%%%%%
\section{Introduction}

Given a measurable map $f: X\to X$ and an $f$-invariant and ergodic probability measure $\mu$, the celebrated Birkhoff's ergodic
theorem assures that for every $\phi\in L^1(\mu)$, the C\'esaro averages $\frac1n \sum_{j=0}^{n-1}\phi(f^j(x))$ are almost everywhere
convergent to $\int \phi\, d\mu$.
Although the random variables $\{\phi\circ f^j\}$ are identically distributed, in general these fail to be independent.  Nevertheless,
the classical results as the central limit theorem and almost sure invariance principles hold for dynamics
with some hyperbolicity (see e.g. \cite{DP,Viana2} and references therein).

In the last decade much effort has been done in order to extend the classical limit theorems for this non-stationary context, namely for the compositions of uniformly expanding maps and piecewise  expanding interval maps (see \cite{Aimino, CR,DS,HNTV,NTV} and
references therein).
In this context the natural random variables obtained by the sequential dynamics are neither independent nor stationary
The strategy used in the large majority of these contributions is to describe the limit properties of non-stationary compositions of Perron-Frobenius operators and to provide limit theorems under mild assumptions on the growth of variances of the appropriate random
variables. Other relevant contributions include also the study of the fast loss of memory on the compositions of Anosov diffeomorphisms ~\cite{Stenlund} and the robustness of ergodic properties for compositions of piecewise expanding maps obtained in \cite{TPS}.

The notion of sequential dynamical systems was introduced in \cite{BB}. The dynamics of these non-autonomous dynamical system present substantial differences
in comparison with the classical dynamical systems context. In order to illustrate some difficulties one would like to mention that the
non-wandering set for sequential dynamical systems is compact set but in general it fails to be invariant by
the sequence of dynamics.
The study of sequential dynamics and the problem of its stability is motivated by the adaptative behavior of biological phenomena
(see e.g. \cite{DS} and references therein).

Our two goals here are to provide limit theorems for convergent sequences of hyperbolic dynamical systems and to prove
a time-dependent stability for sequences of dynamical systems.
More precisely, we consider sequences $\cF=\{f_n\}_n$ of Anosov diffeomorphisms (or expanding maps) such that $d_{C^1}(f_n,f)$ tends
to zero as $n$ tends to infinity. The usual strategy to prove limit theorems for sequential dynamics uses
the Perron-Frobenius transfer operator to show that such dynamics can be well approximated by reverse martingales.
In a context of convergent sequences of maps, our approach is substantially different. We prove that $C^1$-close sequential dynamics
formed by either Anosov diffeomorphisms or expanding maps are stable: there exists a sequence of homeomorphisms that conjugate
the dynamics (cf. Theorem~\ref{cor:Franks}). Moreover, these sequential conjugacies vary in a Lipschitz way with respect to
the sequences of dynamics (see Theorem~\ref{thm:Ainvertivel}). In the case of convergent sequences the sequence of
homeomorphisms is $C^0$-convergent to the identity. This allows us to prove an ergodic stability for hyperbolic maps, roughly meaning
that the behavior of Birkhoff averages for continuous observables is similar to the one observed by convergent sequences of
nearby dynamics. We refer the reader to Theorem~\ref{thm:asymptinv} for precise statements. Since we obtain quantitative results
for the velocity of the previous convergence to identity, limit theorems such as the central limit theorem, the functional central limit theorem and the law of the iterated logarithm transfer from the Brownian motion to time-series generated by observations on the
sequential dynamical system.

%%%%%%%%%%%%%%%
\section{Statement of the main results}\label{sec:statements}

A sequential dynamical system is
given by a collection $\mathcal F=\{f_n\}_{n\in \mathcal Z}$ of continuous maps $f_n : X_n \to X_{n+1}$, where each
$(X_n,d_n)$ is a complete metric space for every $n\in \mathcal Z$ and $\mathcal Z=\mathbb Z^+$ or $\mathbb Z$.
We endow the space of sequences of
maps with the $C^r$ topology. More precisely,
fix $r\ge 0$ and let $\mathcal S^r ((X_n)_n)$ denote the space sequences of $C^r$-differentiable maps
$\mathcal F=\{f_n\}_{n\in \mathbb Z}$, where $f_n : X_n \to X_{n+1}$ and every $X_n$ is a
smooth manifold for all $n\in \mathbb Z$. Given sequences $\mathcal F, \mathcal G \in \mathcal S^r ((X_n)_n)$,
define the distance
$
|\| \mathcal F - \mathcal G\||
	:= \sup_{n\in \mathbb Z} \; d_{C^r} (f_n, g_n),
$
where $d_{C^r} (f,g)$ denotes the usual $C^r$-distance between $f$ and $g$.
Given $n \ge 0$ set
$
F_n=f_{n-1}\circ \dots f_2 \circ f_1 \circ f_0
$
The \emph{(positive) orbit} of $x\in X$ is the set
$\mathcal O_{\mathcal F}^+(x)=\{F_n(x) : n\in \mathbb Z_+\}.$
In the case that $\mathcal Z=\mathbb Z$ and each element of $\mathcal F$ is invertible then the \emph{orbit} of
$x\in X$ is given by the set $\mathcal O_{\mathcal F}(x)=\{F_n(x) : n\in \mathbb Z\}$, where
$F_{-n}=f_{-n}^{-1}\circ \dots \circ f_{-2}^{-1} \circ f_{-1}^{-1}: X_0 \to X_{-n}$ for every $n\in \mathbb Z_+$.

In the present section we will state our main results.

\subsection{Statements}

\subsubsection*{Limit theorems}

An important question in ergodic theory concerns the stability of invariant measures.
In opposition to the notion of statistical stability, where one is interested in the continuity
of a specific class of invariant measures in terms of the dynamics,
here we are interested in the stability of the space of \emph{all} invariant measures.
In the case of expanding maps the existence of sequences of conjugacies
mean that, from a topological viewpoint, one can disregard the first iterates
of the dynamics. This is of particular interest in the case the sequence of dynamics
is converging.
Given a continuous observable $\phi : X \to \mathbb R$, consider the random variables $Y_n=\phi\circ F_n$
associated to the non-autonomous dynamics $\mathcal F=\{f_n\}_{n\in \mathbb Z}$. In general, the sequence
$(Y_n)_n$ may not be independent nor stationary. In particular, invariant measures for all maps in the sequence
$\cF$ seldom exist. In order to establish limit theorems for non-autonomous dynamical systems we consider
convergent subsequences: we assume the sequence $\mathcal F=\{f_n\}_{n\in \mathbb Z}$ is so that $f_n\to f$
in the $C^1$-topology.
We say that a probability measure on $X$ is \emph{$\cF$-average invariant} if
$$\lim_{n\to\infty} \frac1n\sum_{j=0}^{n-1} (f_j)_*\mu=\mu,$$
where $(f_i)_*: \mathcal M(X) \to \mathcal M(X) $ denotes
the usual push-forward map on the space of probability measures on $X$,
We note that the previous notion deals with the individual dynamics $f_j$ instead of the
concatenations $F_j$, and that a notion of invariant measures for sequential dynamics
has been defined in \cite{Kawan}.
Under the previous convergence assumption, it is not hard to check that the set of $\cF$-average invariant probability measures is a
non-empty compact set and it contains the set of $f$-invariant probability measures. This makes natural to ask wether limit theorems for the limiting dynamics $f$ propagate for $C^1$-non-autonomous dynamics. The Birkhoff irregular set of $\phi$ with respect to $\cF$ is
$$
I_{\cF,\phi}:=\Big\{  x\in X: \frac1n \sum_{j=0}^{n-1} \phi (F_j( x )) \text{\,is not convergent\,}\Big\}
$$
and, by some abuse of notation, we denote by $I_{f,\phi}$ the Birkhoff irregular set of $\phi$ with respect to $f$.
Recall that the observable $\phi$ is \emph{cohomologous to a constant $c$ (with respect to $f$)}
if there exists a continuous function $\chi: X\to \mathbb R$ such that $\phi=\chi\circ f -\chi + c$.
Our first result points in this direction by characterizing the limits of C\'esaro averages of continuous
observables and the irregular set for the non-autonomous dynamics $\cF$ in terms of the limit dynamics $f$
(we refer to Section~\ref{sec:prelim} for the necessary definitions).

\begin{theorem}\label{thm:asymptinv}
Assume that $f$ is a $C^1$-transitive Anosov diffeomorphism on a compact Riemannian manifold $X$ and let
$\mathcal F=\{f_n\}_n$ be a sequence of $C^1$ maps that is $C^1$ convergent to $f$.
Then there exists a homeomorphism $h$ such that:
\begin{enumerate}
\item if $\mu$ is $f$-invariant and $\phi \in C(X)$ then for $(h_*\mu)$-almost every $x \in X$
	 one has that $\lim_{n\to\infty} \frac1n \sum_{j=0}^{n-1} \phi (F_j( x ))$ exists
	and $\int \tilde \phi \, d\mu= \int \phi \, d\mu$;
\item if $\mu$ is $f$-invariant and ergodic then
$\lim_{n\to\infty} \frac1n \sum_{j=0}^{n-1} \delta_{F_j( x )}=\mu$
for $(h_*\mu)$-almost every $x \in X$.
\end{enumerate}
Moreover, if $\phi$ is not cohomologous to a constant with respect to $f$ then the Birkhoff irregular set
$
I_{\cF,\phi}:=\big\{  x\in X: \frac1n \sum_{j=0}^{n-1} \phi (F_j( x )) \text{\,is not convergent\,}\big\}
$
is full topological entropy, Baire generic subset of $X$.
\end{theorem}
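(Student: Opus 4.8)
The plan is to transfer the classical ergodic theory of the limit map $f$ to the non-autonomous system $\cF$ through the sequential conjugacy supplied by Theorem~\ref{cor:Franks}, exploiting that it is $C^0$-close to the identity. Since $\cF$ is $C^1$-convergent to the $C^1$-transitive Anosov diffeomorphism $f$, all but finitely many $f_n$ lie in the $C^1$-neighbourhood of $f$ where Theorem~\ref{cor:Franks} applies; discarding those finitely many initial maps only composes every orbit with one fixed homeomorphism and hence changes neither the existence nor the value of Cesàro limits, nor (up to a homeomorphism) the irregular set, so I would assume from the start that the theorem applies to all of $\cF$. It then yields homeomorphisms $(h_n)_{n\ge0}$ of $X$ conjugating $\cF$ to the constant sequence $f$, with $h_n\to\mathrm{id}$ uniformly; telescoping the conjugacy relation gives $F_n=h_n\circ f^n\circ h_0^{-1}$ for every $n\ge0$ (up to replacing each $h_n$ by its inverse, according to the convention of Theorem~\ref{cor:Franks}), and I would take $h:=h_0$. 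Note that $h_n^{-1}\to\mathrm{id}$ uniformly as well, since $X$ is compact.

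The crucial elementary step is the following comparison: for $\phi\in C(X)$ and $x\in X$, put $y:=h_0^{-1}(x)$, so that $\phi(F_j(x))=\phi\big(h_j(f^j(y))\big)$. Since $\phi$ is uniformly continuous on the compact manifold $X$ and $h_j\to\mathrm{id}$ uniformly, for each $\varepsilon>0$ there is $N$ with $|\phi(h_j(z))-\phi(z)|<\varepsilon$ for all $z\in X$, $j\ge N$; splitting the Cesàro sum at $N$ and using $\|\phi\|_\infty<\infty$ one gets
$$
\limsup_{n\to\infty}\Big|\,\tfrac1n\sum_{j=0}^{n-1}\phi(F_j(x))-\tfrac1n\sum_{j=0}^{n-1}\phi(f^j(y))\,\Big|\le\varepsilon.
$$
Letting $\varepsilon\to0$ shows that $\tfrac1n\sum_{j=0}^{n-1}\phi(F_j(x))$ and $\tfrac1n\sum_{j=0}^{n-1}\phi(f^j(h_0^{-1}(x)))$ have exactly the same asymptotic behaviour: one converges if and only if the other does, and then to the same limit. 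In particular $x\in I_{\cF,\phi}$ iff $h_0^{-1}(x)\in I_{f,\phi}$, that is, $I_{\cF,\phi}=h_0(I_{f,\phi})$.

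Items (1) and (2) would then follow from Birkhoff's ergodic theorem for $f$. For (1), let $G$ be the full $\mu$-measure set of $y$ at which $\tfrac1n\sum_{j=0}^{n-1}\phi(f^j(y))$ converges; by the comparison, for every $x$ in the full $(h_0)_*\mu$-measure set $h_0(G)$ the limit $\lim_n\tfrac1n\sum_{j=0}^{n-1}\phi(F_j(x))$ exists and coincides with the $f$-Birkhoff average of $\phi$ at $h_0^{-1}(x)$, whence the integral identity of (1) is immediate from Birkhoff's theorem and the change of variables $y=h_0^{-1}(x)$. For (2), I would apply Birkhoff simultaneously to a countable dense family $\{\phi_k\}\subseteq C(X)$: using ergodicity of $\mu$, for $\mu$-a.e.\ $y$ one has $\tfrac1n\sum_{j=0}^{n-1}\phi_k(f^j(y))\to\int\phi_k\,d\mu$ for all $k$, and the comparison transfers this to $x=h_0(y)$ for all $k$; since the empirical measures $\tfrac1n\sum_{j=0}^{n-1}\delta_{F_j(x)}$ are probabilities and $\{\phi_k\}$ is dense, this gives weak-$*$ convergence to $\mu$ for $(h_0)_*\mu$-a.e.\ $x$.

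For the last assertion, if $\phi$ is not cohomologous to a constant with respect to $f$ then (by Liv\v{s}ic's theorem) there are two periodic $f$-orbits along which the averages of $\phi$ differ, and the multifractal analysis of Birkhoff averages for transitive Anosov diffeomorphisms — which enjoy the specification property — shows that $I_{f,\phi}$ is a Baire-generic subset of $X$ with full topological entropy $h_{\mathrm{top}}(f)$. Baire genericity of $I_{\cF,\phi}=h_0(I_{f,\phi})$ then follows because $h_0$ is a homeomorphism; for the entropy statement I would use that $F_n=h_n\circ f^n\circ h_0^{-1}$ with $h_n,h_n^{-1}\to\mathrm{id}$ uniformly and $h_0^{\pm1}$ uniformly continuous makes the $(n,\varepsilon)$-dynamical balls of $f$ and of $\cF$ comparable up to a change of $\varepsilon$, so that $h_0$ carries the Bowen topological entropy of $I_{f,\phi}$ with respect to $f$ onto that of $I_{\cF,\phi}$ with respect to $\cF$, while the same comparison gives $h_{\mathrm{top}}(\cF)=h_{\mathrm{top}}(f)$. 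The hard part will be exactly this last point: one must pin down the notion of topological entropy for the non-autonomous system $\cF$ (as a Bowen--Pesin capacity relative to the sequential dynamical balls) and check carefully, keeping track of the uniform moduli of continuity of $h_0^{\pm1}$ and of the rate $h_n\to\mathrm{id}$, that the sequential conjugacy preserves it; everything else is a soft transfer across the $C^0$-small conjugacy.
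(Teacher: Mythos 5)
Your proposal follows essentially the same route as the paper: both obtain the sequential conjugacy $F_n = h_n\circ f^n\circ h^{-1}$ with $h_n\to\mathrm{id}$ uniformly, compare the Ces\`aro sums via the uniform convergence $\phi\circ h_j\to\phi$ (the paper writes this as $\frac1n\sum_j\|\phi\circ h_j-\phi\|_{C^0}\to 0$), deduce (1) and (2) from Birkhoff's theorem, identify $I_{\cF,\phi}=h(I_{f,\phi})$, and prove the entropy statement by comparing $(n,\varepsilon)$-separated sets for $f$ and $\cF$ through the conjugacy (the paper's Lemma~\ref{le:entropy}), quoting the residuality and full-entropy results for $I_{f,\phi}$ under specification. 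The argument is correct and matches the paper's proof in all essential steps.
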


An analogous of Theorem~\ref{thm:asymptinv} for convergent sequences of $C^1$-expanding maps
holds with the same argument used in the proof, where the existence of sequential conjugacies for $C^1$-close
Anosov diffeomorphisms is replaced by the same property for $C^1$-expanding maps.
Recent contributions on limit theorems for non-autonomous dynamics include \cite{HNTV,NTV}.
A central limit theorem holds in a neighborhood of structural stability, provided the dynamics
converge sufficiently fast. We say that a sequence of random variables satisfies the
\emph{almost sure invariance principle (ASIP)}
if there exists $\vep>0$, a sequence of random variables $(S_n)_n$
and a Brownian motion $W$ with variance $\sigma^2\ge 0$ such that $\sum_{j=0}^{n-1} \phi\circ F_j=_{\mathcal D} S_n$
and
$$
S_n= W_n + \mathcal O(n^{\frac12 -\vep})
$$
almost everywhere.
The ASIP implies in many other limit theorems as the central limit theorem CLT), the weak invariance principle (WIP) or the law of the iterated
logarithm (LIL) (see e.g. \cite{PS}).
In order to prove limit theorems we require observables to be at least H\"older continuous. We prove the following:

\begin{theorem}\label{thm:ASIP}
Assume that $f$ is a $C^1$-transitive Anosov diffeomorphism on a compact Riemannian manifold $X$, let
$\mathcal F=\{f_n\}_n$ be a sequence of $C^1$ maps and let
$a_n:=\sup_{\ell \ge n}\|f_\ell-f\|_{C_1}$. If $\phi: X \to \mathbb R$ is a $\alpha$-H\"older continuous,
$\int \phi\, d\mu=0$  and there exists $C>0$ so that $a_j \le C j^{-(\frac{1}{2}+\vep)\frac1\alpha}$ for all $j\ge 1$
then $\{ \phi \circ F_j\}$ satisfies the ASIP.
\end{theorem}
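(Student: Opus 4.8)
The plan is to transfer the classical almost sure invariance principle for the limiting Anosov diffeomorphism $f$ to the non-autonomous Birkhoff sums $\sum_{j=0}^{n-1}\phi\circ F_j$ by means of the sequential conjugacies produced by Theorem~\ref{thm:Ainvertivel}, using crucially that these conjugacies converge to the identity at a rate controlled by $a_n$. Concretely, since $f$ is transitive Anosov, after passing to a tail $\{f_n\}_{n\ge N}$ of $\mathcal F$ if necessary (which only modifies the conclusion by a composition with the fixed diffeomorphism $F_N$ and a bounded additive error), Theorem~\ref{thm:Ainvertivel} provides homeomorphisms $(h_n)_{n\ge0}$ of $X$ with $h_{n+1}\circ f=f_n\circ h_n$ for all $n$, equivalently $F_n=h_n\circ f^{\,n}\circ h_0^{-1}$ for every $n\ge0$. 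The quantitative input I would extract is the tail estimate $\|h_n-\mathrm{id}\|_{C^0}\le C\,a_n$ for some $C=C(f)>0$: one applies the Lipschitz dependence of Theorem~\ref{thm:Ainvertivel} to the shifted sequence $\{f_{n+k}\}_{k\ge0}$ and the constant sequence $\{f,f,\dots\}$, whose distance equals $a_n=\sup_{\ell\ge n}\|f_\ell-f\|_{C^1}$, and identifies the zeroth term of the resulting conjugacy with $h_n$ by uniqueness of $C^0$-small sequential conjugacies (a consequence of expansivity/shadowing for concatenations of $C^1$-close Anosov maps).

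Next I would rewrite the observables along the conjugacy. Put $h:=h_0$, $y:=h^{-1}(x)$ and $\phi_j:=\phi\circ h_j$; then $\phi(F_j(x))=\phi_j(f^{\,j}(y))$, so that
$$
\sum_{j=0}^{n-1}\phi(F_j(x))=\sum_{j=0}^{n-1}\phi(f^{\,j}(y))+\sum_{j=0}^{n-1}\big(\phi\circ h_j-\phi\big)(f^{\,j}(y)).
$$
Since $\phi$ is $\alpha$-H\"older and $\|h_j-\mathrm{id}\|_{C^0}\le Ca_j$, the correction term is bounded, uniformly in $y$, by $[\phi]_\alpha\sum_{j=0}^{n-1}(Ca_j)^\alpha$, and the hypothesis $a_j\le Cj^{-(\frac12+\vep)\frac1\alpha}$ turns this into $O(n^{\frac12-\vep})$ (a convergent series when $\vep\ge\frac12$). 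Moreover $h$ pushes $\mu$ to $h_*\mu$, hence the partial-sum process $\big(\sum_{j=0}^{n-1}\phi\circ F_j\big)_n$ under $h_*\mu$ has the same law as $\big(\sum_{j=0}^{n-1}\phi_j\circ f^{\,j}\big)_n$ under $\mu$; note also that, because $\|h_j-\mathrm{id}\|_{C^0}\to0$, this is the measure for which $\frac1n\sum_{j=0}^{n-1}\phi\circ F_j\to\int\phi\,d\mu=0$.

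Finally I would invoke the classical ASIP for $f$ and combine. As $f$ is transitive Anosov, $\phi$ is $\alpha$-H\"older and $\int\phi\,d\mu=0$, there exist $\delta>0$ and, on a probability extension of $(X,\mu)$, a Brownian motion $W$ with variance $\sigma^2\ge0$ (the Green--Kubo variance of $\phi$ for $(f,\mu)$) such that $\sum_{j=0}^{n-1}\phi\circ f^{\,j}=W_n+\mathcal O(n^{\frac12-\delta})$ almost surely (see e.g.~\cite{DP}). Setting $S_n:=\sum_{j=0}^{n-1}\phi_j\circ f^{\,j}$ on this extension and adding the deterministic correction estimated above yields $S_n=W_n+\mathcal O(n^{\frac12-\vep'})$ a.s., with $\vep':=\min\{\delta,\vep\}>0$, while $S_n=_{\mathcal D}\sum_{j=0}^{n-1}\phi\circ F_j$ with respect to $h_*\mu$; this is precisely the ASIP for $\{\phi\circ F_j\}$. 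I expect the main difficulty to be the tail rate $\|h_n-\mathrm{id}\|_{C^0}=\mathcal O(a_n)$, i.e.\ deducing it from the (global) Lipschitz dependence of Theorem~\ref{thm:Ainvertivel} via shift-invariance and uniqueness of small conjugacies; once that is in hand the remainder is a routine matching of polynomial rates, for which the standing hypothesis on $(a_j)$ is tailor-made.
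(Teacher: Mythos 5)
Your proposal is correct and follows essentially the same route as the paper: conjugate the non-autonomous orbit to the orbit of $f$ via the sequential conjugacies $h_n$ with $d_{C^0}(h_n,\mathrm{id})\le L\,a_n$, bound $\|\phi\circ h_j-\phi\|_{C^0}\le|\phi|_\alpha L^\alpha a_j^\alpha$ by H\"older continuity, sum to get an $\mathcal O(n^{\frac12-\vep})$ error, and invoke the classical ASIP for the H\"older observable under the limiting Anosov map $f$. The only difference is that you spell out the derivation of the tail estimate $\|h_n-\mathrm{id}\|_{C^0}=\mathcal O(a_n)$ from the shifted sequences, which the paper takes as already established in the proof of Theorem~\ref{thm:asymptinv}.
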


Some comments are in order. The previous result shows that the sum of the non-stationary random variables
$\{ \phi \circ F_j\}$ are strongly approximated by the Brownian motion.
We note that for a lower regularity (H\"older exponent) of the potential we require a higher
velocity of convergence given by the tail of the sequence $\cF$. We should also mention some related results.
The CLT under a stability assumption (convergence to a map) for piecewise expanding interval maps was proved in \cite{CR}.
The latter result is related to \cite[Theorem~3.1]{HNTV} where the authors obtain the ASIP for sequences of expanding maps.
In both papers, the authors used spectral methods and the analysis of compositions of the transfer operators.
Our approach differs significantly as we explore the existence of sequential conjugacies for these non-autonomous dynamics
(see Subsection~\ref{subsec:sequentialconj} for definition and more details).
The previous result is complementary to the results by Stenlund ~\cite{Stenlund} on exponential memory loss
for compositions of Anosov diffeomorphisms.

\subsubsection*{A stability result}

The notion of uniform hyperbolicity is strongly related to $C^1$-structural stability, that is, that of $C^1$-dynamics that
is topologically conjugate to all $C^1$-nearby dynamical systems (see e.g.~\cite{Hay,Ma1}).
In this subsection we shall consider the stability of non-autonomous sequences of $C^1$-Anosov diffeomorphisms.

\begin{theorem}\label{cor:Franks}
Let $X$ be a compact Riemannian manifold and $f \in \text{Diff}^r(X)$ be a $C^r$ Anosov diffeomorphism on $X$,
$r\ge 1$.  There exists $\vep>0$ so that if $\cF=\{f_n\}_{n\in \mathbb Z}$ is a sequence with
$d_{C^1}(f_n,f)<\vep$ for all $n\in\mathbb Z$ then
there exists  a sequence $(h_n)_{n\in \mathbb Z_+}$ of homeomorphisms on $X$ so that
\begin{equation}\label{eq:qconjhn}
f_{n-1} \circ \dots \circ f_1 \circ f_0\,=\,  h_n^{-1} \circ f^n \circ h
\quad \text{for every $n\in \mathbb Z_+$.}
\end{equation}
\end{theorem}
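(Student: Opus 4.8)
The plan is to reduce the statement to a sequential version of Franks's and Walters's argument proving structural stability of Anosov diffeomorphisms, carried out uniformly in $n$. Writing $F_n = f_{n-1}\circ\cdots\circ f_0$, the equation~\eqref{eq:qconjhn} asks for homeomorphisms $h$ and $h_n$ with $F_n = h_n^{-1}\circ f^n\circ h$; equivalently, setting $h_0 = h$, we want a sequence of conjugacies intertwining $\cF$ with the constant sequence $\{f\}$ step by step, i.e. $h_{n+1}\circ f_n = f\circ h_n$ for every $n\ge 0$. So the real content is: construct homeomorphisms $h_n : X\to X$, uniformly $C^0$-bounded, solving this cocycle of conjugacy equations, and then set $h = h_0$ and read off~\eqref{eq:qconjhn} by telescoping ($h_n\circ F_n = f^n\circ h_0$).

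First I would fix the (global) stable/unstable splitting $TX = E^s\oplus E^u$ of $f$ and the adapted metric, and choose $\vep$ small enough that every $f_n$ with $d_{C^1}(f_n,f)<\vep$ is Anosov with a nearby splitting and with uniform hyperbolicity constants; in particular all finite compositions $F_n$, and all ``tails'' $f_{m+k}\circ\cdots\circ f_m$, are uniformly hyperbolic with the same rates, and each $f_n$ is a diffeomorphism $C^0$-close to $f$. The classical way to produce the conjugacy is the graph-transform / fixed-point method on the space of continuous sections: consider the shift-type operator on $C^0(X,X)$ (or on continuous maps close to the identity, lifting to the universal cover or using the tubular-neighborhood trick so that ``close maps'' can be subtracted) defined so that its fixed point $h_0$ satisfies $h_1\circ f_0 = f\circ h_0$ with $h_1$ determined in turn by the next equation, and so on. Concretely, because we are comparing with the \emph{same} map $f$ at every stage, the cleanest route is to set up a single operator $\Phi$ on the Banach space $B = \{(\theta_n)_{n\ge 0} : \theta_n\in C^0(X,\RR^{\dim X}),\ \sup_n\|\theta_n\|_\infty<\infty\}$ whose fixed point encodes $h_n = \id + \theta_n$ (interpreted via the exponential map), where $\Phi$ is built from the hyperbolic splitting of $f$: on the unstable part one uses $f^{-1}$ to contract, on the stable part one uses $f$, and the perturbation terms involve $f_n - f$ which are uniformly $<\vep$. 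Uniform hyperbolicity of $f$ makes $\Phi$ a contraction on $B$ for $\vep$ small, giving a unique bounded fixed point $(\theta_n)_n$, hence continuous maps $h_n$ that solve $h_{n+1}\circ f_n = f\circ h_n$ and are uniformly $C^0$-close to $\id$.

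Next I would upgrade each $h_n$ from a continuous map to a homeomorphism. For this I run the \emph{same} fixed-point scheme with the roles reversed, comparing the constant sequence $\{f\}$ to $\cF$, to obtain bounded continuous maps $g_n$ with $g_{n+1}\circ f = f_n\circ g_n$; then $g_{n+1}\circ h_{n+1}\circ f_n = g_{n+1}\circ f\circ h_n = f_n\circ g_n\circ h_n$, so $(g_n\circ h_n)_n$ solves the conjugacy cocycle of $\cF$ with itself and is $C^0$-close to the identity — by the uniqueness part of the contraction (applied to $\cF$ vs. $\cF$, whose obvious bounded solution is the constant identity sequence) we get $g_n\circ h_n = \id$ for all $n$, and symmetrically $h_n\circ g_n = \id$. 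Hence each $h_n$ is a homeomorphism with inverse $g_n$. Finally, telescoping $h_{n+1}\circ f_n = f\circ h_n$ from $n=0$ gives $h_n\circ F_n = f^n\circ h_0$, i.e. $F_n = h_n^{-1}\circ f^n\circ h_0$; putting $h := h_0$ yields~\eqref{eq:qconjhn}.

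The main obstacle is making the fixed-point operator genuinely well-defined and contracting on the space of \emph{bounded sequences of maps} rather than on a single map: one must check that the constants (the contraction rate, the size of the allowed perturbation $\vep$, the domain of the tubular neighborhood used to make sense of $h_n = \exp(\theta_n)$ and to subtract nearby maps) can all be chosen \emph{independently of $n$}, which is exactly where uniform hyperbolicity of $f$ and the uniform bound $d_{C^1}(f_n,f)<\vep$ are used; this is the sequential analogue of what is presumably proved in Theorem~\ref{thm:Ainvertivel}, and I would invoke that machinery rather than redo the estimates. A secondary technical point is the passage from continuous solution to homeomorphism: the clean uniqueness argument above requires that the operator associated to the pair $(\cF,\cF)$ have the constant identity sequence as its unique bounded fixed point, which again follows from the same contraction once $\vep$ is small.
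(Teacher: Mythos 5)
Your strategy is sound but genuinely different from the paper's. You solve the step-by-step conjugacy equations $h_{n+1}\circ f_n = f\circ h_n$ by a Franks--Robbin/Moser-type contraction on a Banach space of bounded sequences of sections; this is precisely the route of \cite{Franks74} that the authors explicitly contrast with their own. The paper instead deduces Theorem~\ref{cor:Franks} from Theorem~\ref{thm:Ainvertivel}: taking the constant sequence $\{f\}_n$ as reference, each orbit $(F_j(x))_{j\in\mathbb Z}$ of the perturbed sequence is a pseudo-orbit for it, and the Lipschitz shadowing property (proved geometrically, by intersecting local stable and unstable manifolds of the Anosov sequence and iterating a local product structure argument) produces the unique shadowing point $h(x)$; expansiveness gives injectivity, and applying the same construction to the shifted sequences $\cF^{(n)}$ yields the $h_n$ and the relation \eqref{eq:qconjhn}. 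The shadowing route delivers directly the quantitative bound $d_{C^0}(h,\id)\le K\,|\|\cF-\cG\||$ used later in Theorems~\ref{thm:asymptinv} and~\ref{thm:ASIP}; your route would give an equally explicit bound from the contraction constant, so either approach suffices for the qualitative statement.

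There is, however, one genuine gap in your invertibility step: you set everything up for $n\ge 0$ only, and you claim that the identity is the unique bounded solution near $\id$ of the self-cocycle $k_{n+1}\circ f_n = f_n\circ k_n$. For one-sided sequences of Anosov diffeomorphisms this is false: such a solution is $k_n = F_n\circ k_0\circ F_n^{-1}$, and it is uniformly $C^0$-bounded exactly when $k_0$ moves each point inside its local stable manifold for $\cF$; since $\dim E^s\ge 1$ there exist non-trivial homeomorphisms of this kind. Equivalently, Anosov diffeomorphisms are expansive but not positively expansive, so forward closeness of orbits does not force equality of points --- this is precisely why the paper's one-sided result, Proposition~\ref{thm:C}, is stated for expanding maps, where $E^s=0$. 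Your contraction does have a unique fixed point once you normalize, say, the stable component of $\theta_0$ to vanish, but then $(g_n\circ h_n)_n$ need not satisfy the same normalization, so you cannot conclude $g_n\circ h_n=\id$ from uniqueness. The repair is to use the hypothesis that $\cF$ is indexed by $\mathbb Z$: run the fixed-point scheme over all $n\in\mathbb Z$, where the bounded solution close to the identity is unique by two-sided expansiveness (any such $k_0$ satisfies $k_0(y)\in \cW^s_{\cF,\delta_1}(y)\cap\cW^u_{\cF,\delta_1}(y)=\{y\}$), and then restrict to $n\in\mathbb Z_+$ to read off \eqref{eq:qconjhn}.
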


The previous theorem should be compared with the stability of $C^2$ Axiom A diffeomorphisms
with the strong transversality condition proved by Franks~\cite{Franks74} (after \cite{Robbin71}): if $f$ is
a $C^2$ Axiom A diffeomorphism with the strong transversality condition then there exists a $C^1$-open neighborhood
of $f$ and for every finite set $g_1, g_2, \dots, g_n \in \cU$ there exists a homeomorphism $h$ so that
$g_{n} \circ \dots \circ g_1\,=\,  h^{-1} \circ f^n \circ h$. The approach in \cite{Franks74} is use Banach's fixed point theorem
and to construct the conjugacy as the fixed point of a suitable contraction on a Banach space.
Some stability results for Anosov families have been announced in \cite{Acevedo2}, by a similar technique.
For that reason, the dependence of the conjugacy $h$ on increasing sequences of
diffeomorphisms $g_{1}, g_2, \circ \dots, g_n$ is not explicit.
 As we consider infinite sequences of maps we obtain a sequence of homeomorphisms $(h_n)_n$ satisfying the time-adapted
almost conjugacy condition ~\eqref{eq:qconjhn}.
A version of Theorem~\ref{cor:Franks} for sequences of $C^1$-expanding maps will appear later in Proposition~\ref{thm:C}.

\subsection{Ideas in the proofs}

In this subsection we introduce the ideas underlying the proof of the main results and detail the organization of the paper.
First we note that it is natural to expect that a \emph{quantitative} version of Theorem~\ref{cor:Franks} could be useful
to prove the ergodic stability results established in Theorems~\ref{thm:asymptinv} and ~\ref{thm:ASIP}.
Furthermore, we highlight that the existence of sequential conjugacies in Theorem~\ref{cor:Franks} seldom follows
from structural stability. Indeed, in the setting of Theorem~\ref{cor:Franks}, even though all elements of $\cF=\{f_n\}_{n\in \mathbb Z}$ are topologically conjugated to $f_0$, say for every $n$ there exists an homeomorphism  $\tilde h_n$ satisfying $f_n = \tilde h_n^{-1} \circ f_0 \circ \tilde h_{n}$, the compositions
$$
F_n = \tilde h_n^{-1} \circ f_0 \tilde \circ h_{n} \tilde  h_{n-1}^{-1} \circ \dots  \circ f_0 \circ \tilde  h_{2} \tilde  h_1^{-1} \circ f_0 \circ
\tilde h_{1} \tilde  h_0^{-1} \circ f_0  \circ \tilde h_{0} \circ f_0
$$
could behave wildly since it consists of an alternated iteration of $f_0$ with the homeomorphisms
$\tilde h_k \tilde h_{k-1}^{-1}$, which are only $C^0$-close to identity.
For any $k\in \mathcal Z$ consider the shifted sequence $\mathcal F^{(k)}=\{f_{n+k}\}_{n\in \mathcal Z}$.
Our approach to construct sequential conjugacies is to explore shadowing for sequences of hyperbolic dynamical systems
(see also Proposition~\ref{thm:C} for a similar statement in the context of sequences of expanding maps)
as follows:

\begin{theorem}\label{thm:Ainvertivel}
Let $X$ be a compact Riemannian manifold and let
$f\in \text{Diff}^{\,1}(X)$ be an Anosov diffeomorphism.
There exists a $C^1$-open neighborhood $\cU$ of $f$ so that every $\mathcal F=\{f_n\}_{n\in \mathbb Z}$
formed by elements of $\cU$ satisfies the
Lipschitz shadowing property.
Moreover:
\begin{enumerate}
\item for any $\beta>0$ there exists $\zeta>0$ so that any $\zeta$-pseudo orbit
$(x_n)_{n \in \mathbb Z}$ is $\beta$-shadowed by a unique point $x\in X$;
\item there exists $K>0$ so that if  $\vep>0$ is small so that, for any sequence $\mathcal G=\{g_n\}_n$ of elements in $\cU$ with
$|\| \mathcal F - \mathcal G \||<\vep$ there exists a unique homeomorphism
$h=h_{\mathcal F, \mathcal G}: X\to X$ so that $d_{C^0}(h_{\cF,\cG}, id) \le K |\|\cF - \cG\||$ and
\begin{equation}\label{eq:conjugacyn}
h_{ \cG^{(n)},\cF^{(n)}} \circ F_n = G_n \circ h_{\mathcal G,\mathcal F}
	\quad \text{$\forall n\in \mathbb Z_+$,}
\end{equation}
where $h_{\cF^{(n)}, \cG^{(n)}}: X \to X$ denotes the uniquely homeomorphism determined
by the sequences $\cF^{(n)}$ and $\cG^{(n)}$.
\end{enumerate}
In particular $d_{C^0}(h_{\cF,\cG}, id) \to 0$ as $|\| \cF - \mathcal G\|| \to 0$.
\end{theorem}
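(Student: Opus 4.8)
The plan is to reduce the whole theorem to a \emph{uniform Lipschitz shadowing} statement for sequences drawn from a single small $C^1$-neighbourhood of $f$, and then to manufacture the homeomorphisms and the equivariance~\eqref{eq:conjugacyn} by shadowing genuine orbits of one sequence by genuine orbits of the other. So the first step is to fix $\cU$. Since $f$ is Anosov there are continuous cone fields $\mathcal C^u,\mathcal C^s$ on $TX$ and $\lambda\in(0,1)$ such that $Df$ maps $\mathcal C^u$ strictly into itself, expanding vectors by at least $\lambda^{-1}$, and $Df^{-1}$ does the same with $\mathcal C^s$. Shrinking $\cU$ in the $C^1$ topology, every $g\in\cU$ and its inverse satisfy the same cone and rate conditions with slightly worse but still uniform constants. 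Consequently, for any $\cF=\{f_n\}_{n\in\ZZ}$ with $f_n\in\cU$ and \emph{any} sequence of points $(x_n)_{n\in\ZZ}$ in $X$, the linear cocycle $A_n$ obtained from $Df_n(x_n)$ read in geodesic normal charts at $x_n$ and $x_{n+1}$ is uniformly hyperbolic; equivalently, the bounded difference operator $\mathcal T\colon(v_n)_n\mapsto(v_{n+1}-A_nv_n)_n$ on the space $\ell^\infty\big(\bigoplus_n T_{x_n}X\big)$ of bounded tangent sequences is a linear isomorphism with $\|\mathcal T^{-1}\|$ bounded by a constant depending only on $\cU$. This is the classical equivalence between hyperbolicity and admissibility of bounded perturbations; the only point to stress is that all the constants can be chosen uniformly over sequences in $\cU$ and over base sequences $(x_n)_n$.

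The second and central step is Lipschitz shadowing, which gives item~(1) and the existence part of item~(2). Given a $\zeta$-pseudo orbit $(x_n)_{n\in\ZZ}$ for $\cF$, I look for a genuine orbit of the form $y_n=\exp_{x_n}(v_n)$ with $v_n\in T_{x_n}X$ small. Writing $y_{n+1}=f_n(y_n)$ in the normal charts turns it into $v_{n+1}=A_nv_n+b_n+R_n(v_n)$, where $|b_n|\le c\,\zeta$ is the pseudo-orbit jump and $R_n$ collects the chart distortion; shrinking $\cU$ once more if needed, $R_n$ is uniformly Lipschitz with small Lipschitz constant on a fixed small ball. Hence $v=(v_n)_n$ must solve $v=\mathcal T^{-1}\big(b+R(v)\big)$, and for $\zeta$ small enough the right-hand side contracts a ball of radius $O(\zeta)$ in $\ell^\infty$; Banach's fixed point theorem produces a unique such $v$ with $\|v\|_\infty\le K\zeta$, hence a point $x=y_0\in X$ with $d(F_n(x),x_n)\le K\zeta$ for all $n\in\ZZ$ (the $F_n$ with $n<0$ being defined via the inverses $f_n^{-1}$, which exist since $\cU$ consists of diffeomorphisms). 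Injectivity of $\mathcal T$ is exactly uniqueness of the bounded solution, i.e. uniqueness of the shadowing point; choosing $\zeta$ with $K\zeta\le\beta$ proves~(1), while the estimate $\|v\|_\infty\le K\zeta$ is the Lipschitz shadowing property with $K$ uniform over all $\cF$ with elements in $\cU$.

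The third step builds the conjugacies. For sequences $\cF=\{f_n\}_n$, $\cG=\{g_n\}_n$ with elements in $\cU$ and $\vep:=|\|\cF-\cG\||$ small, and any $x\in X$, the two-sided $\cF$-orbit $(F_n(x))_{n\in\ZZ}$ satisfies $d\big(F_{n+1}(x),g_n(F_n(x))\big)=d\big(f_n(F_n(x)),g_n(F_n(x))\big)\le\vep$, so it is an $\vep$-pseudo orbit for $\cG$; by the second step it is $K\vep$-shadowed by a unique genuine $\cG$-orbit, and I let $h(x)=h_{\cG,\cF}(x)$ be its base point. Taking $n=0$ gives $d(h(x),x)\le K\vep$, which is the asserted $C^0$-bound and in particular yields $d_{C^0}(h_{\cF,\cG},\id)\to0$ as $|\|\cF-\cG\||\to0$. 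Exchanging the roles of $\cF$ and $\cG$ produces $h_{\cF,\cG}$, and uniqueness of shadowing shows that $h_{\cG,\cF}$ and $h_{\cF,\cG}$ are mutual inverses — for $\vep$ small the composite sends $x$ to the base point of a true $\cF$-orbit within $2K\vep$ of the $\cF$-orbit of $x$, hence equal to it by uniqueness. Bicontinuity follows from a compactness argument: if $x_k\to x$, any subsequential limit of $h(x_k)$ is the base point of a $\cG$-orbit that $K\vep$-shadows the $\cF$-orbit of $x$, hence equals $h(x)$. Finally, since the $\cF^{(n)}$-orbit of $F_n(x)$ is the shifted orbit $(F_{n+m}(x))_{m\in\ZZ}$, and likewise for $\cG$, and the latter $K\vep$-shadows the former, uniqueness of the shadowing orbit forces $h_{\cG^{(n)},\cF^{(n)}}\big(F_n(x)\big)=G_n\big(h_{\cG,\cF}(x)\big)$, which is~\eqref{eq:conjugacyn}; the same uniqueness shows $h$ is the only homeomorphism $C^0$-close to the identity obeying this relation.

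The main obstacle is carrying out the second step \emph{uniformly}: proving the bounded invertibility of $\mathcal T$ along arbitrary pseudo orbits of arbitrary sequences from $\cU$, with constants depending only on $\cU$, and keeping honest track of the geodesic charts so that the nonlinear remainder $R$ really contracts a ball whose radius does not shrink as the pseudo orbit degenerates. An equivalent, more hands-on route avoids the dichotomy formalism and runs a graph-transform fixed point using the uniform stable and unstable cone fields directly; either way the work is concentrated here, whereas the construction of $h$, its bicontinuity, the equivariance~\eqref{eq:conjugacyn}, and the final convergence $d_{C^0}(h_{\cF,\cG},\id)\to0$ are then formal consequences of uniqueness of shadowing.
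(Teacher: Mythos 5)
Your argument is correct, but the core step is carried out by a genuinely different method from the paper's. For the Lipschitz shadowing of two-sided pseudo-orbits you use the functional-analytic route: uniform hyperbolicity of the derivative cocycle along the pseudo-orbit gives bounded invertibility of the difference operator $\mathcal T(v)_n=v_{n+1}-A_nv_n$ on $\ell^\infty$, and the shadowing point is produced by Banach's fixed point theorem applied to $v\mapsto\mathcal T^{-1}(b+R(v))$. This is essentially the Franks--Robbin scheme that the authors cite in the introduction but deliberately do not follow. The paper instead runs a geometric, Bowen-style argument: it first builds uniform local stable and unstable manifolds $\cW^{s/u}_{\cF^{(k)},\delta_1}$ for the Anosov sequence via the graph transform, then shadows a finite pseudo-orbit by recursively defining homoclinic points $y_{n+1}\in\cW^s_{\cF^{((n+1)N)},\vep}(x_{(n+1)N})\pitchfork\cW^u_{\cF^{((n+1)N)},\vep}(F^{(nN)}_N(y_n))$ using the uniform local product structure, sums the geometric series $\sum\tilde\lambda^{\ell N}\vep$ to bound the shadowing distance, and passes from finite to infinite pseudo-orbits by compactness; uniqueness and injectivity of $h$ then come from the expansiveness furnished by the characterization $\cW^s\cap\cW^u=\{y\}$, and surjectivity from invariance of domain. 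Your approach buys a cleaner derivation of the Lipschitz constant (it is essentially $\|\mathcal T^{-1}\|$) and treats two-sided infinite pseudo-orbits in one stroke, with uniqueness coming for free from injectivity of $\mathcal T$; the paper's approach is more elementary and produces the invariant-manifold machinery it reuses elsewhere. Two points in your write-up deserve a more careful formulation, though neither is a real gap: the cocycle $A_n=Df_n(x_n)$ read in charts at $x_n$ and $x_{n+1}$ is uniformly hyperbolic only when the jumps $d(f_n(x_n),x_{n+1})$ are small enough that the chart identifications nearly preserve the cone fields (not for \emph{any} sequence of base points, as you state); and in the $C^1$ category the uniform smallness of $\operatorname{Lip}(R_n)$ on a fixed ball must be extracted from $\|Df_n(y)-Df_n(x)\|\le 2\,\sup_{g\in\cU}d_{C^1}(g,f)+\omega_{Df}(d(x,y))$, so that both the radius of $\cU$ and the radius of the ball have to be shrunk, the latter in terms of the modulus of continuity of $Df$ for the fixed limiting map $f$. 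The construction of $h_{\cF,\cG}$, its bicontinuity, the relation \eqref{eq:conjugacyn} via uniqueness of the shadowing orbit for the shifted sequences, and the bound $d_{C^0}(h_{\cF,\cG},\id)\le K|\|\cF-\cG\||$ are handled in your proposal essentially as in the paper.
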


We will use this \emph{quantitative} version of
Theorem~\ref{cor:Franks} to prove the ergodic stability for sequences of hyperbolic maps.
This paper is organized as follows. In Section~\ref{sec:prelim} we recall some preliminary notions of stability, shadowing
and entropy for sequential dynamical systems. Section~\ref{sec:shadowingnaut} we prove some shadowing results for
both sequences of $C^1$-expanding maps and sequences of nearby $C^1$-Anosov diffeomorphisms. This allow us to
construct sequential conjugacies, which we explore in Section~\ref{sec:cors} to prove the main results on the ergodic
stability for convergent sequences of hyperbolic dynamics.

\section{Preliminaries}\label{sec:prelim}

\subsection{Sequential and almost conjugacies}\label{subsec:sequentialconj}

A sequence $\cF$ of continuous maps acting on a compact metric space $X$ is \emph{topologically stable} if for
every $\vep>0$ there exists $\delta>0$ such that for every sequence $\mathcal G=\{g_n\}_n$ so that $|\| \mathcal F - \mathcal G\||<\delta$ there exists a continuous map $h : X \to X$ so that $\|h-id\|_{C^0}<\vep$ and $d(F_n(h(x)), G_n(x))<\vep$ for all
$n\in \mathbb Z$.
Moreover, $\mathcal F=\{f_n\}_{n\in \mathbb Z_+}$ is an \emph{expansive} sequence of maps if there exists $\vep>0$
so that for any distinct points $x,y\in X_0$ there exists $n\in \mathbb Z_+$ such that $d(F_n(x),F_n(y))>\vep$.
It is known that any positively expansive sequential dynamics admits an adapted metric on which it actually expands distances \cite{Kawan}
and that positively expansive non-autonomous  dynamical systems acting on a compact metric space with the shadowing property are
topologically stable \cite{DR14}.

Given $\beta> 0$ and the sequences of continuous maps $\mathcal F= \{f_n\}_{n \geq 1}$ and $\mathcal G= \{g_n\}_{n \geq 1}$
on a complete metric space $(X,d)$, we say that an homeomorphism $h:X \to X$ is a \emph{$\beta$-quasi-conjugacy between $\mathcal F$ and
$\mathcal G$} if
$$
d_{C^0} (h\circ F_n , G_n \circ h) \leq \beta
$$
for every $n\in \mathbb Z_+$, where $d_{C^0}(f, g)=\|f-g\|_{C^0}$ denotes the distance in the $C^0$-topology.
The second notion does not require compactness nor the maps to act on the same compact metric space.
Given
sequences $\mathcal F=\{f_n\}_{n\in \mathbb Z}$ and $\mathcal G=\{g_n\}_{n\in \mathbb Z}$  of continuous maps acting on complete metric spaces $(X_n)_n$, we say that a sequence $\mathcal H=\{h_n\}_n$ of homeomorphisms $h_n : X_n \to X_n$ is a
\emph{sequential conjugacy between $\cF$ and $\cG$} provided that the maps $F_n : X_0 \to X_n$ and $G_n : X_0 \to X_n$
to satisfy
$$
h_n \circ F_n = G_n \circ h_0 \;\text{for all $n\in \mathbb Z$}.
$$
Each of the maps $h_n$ in the notion of sequential conjugacies are defined in terms of the infinite sequence of maps
$\cF^{(n)}$.

Similarly to the classical setting, in our setting sequential conjugacies $C^0$-close to the identity are unique
(recall Theorem~\ref{thm:Ainvertivel}). Moreover, if $\cF=\{f\}_n$ and $\cG=\{g_n\}_n$ are sequences
of dynamical systems on a compact metric space $X$ that admit a unique sequential conjugacy $C^0$-close
to the identity and the sequential conjugacies are constant (i.e. $h_n=h: X\to X$ for all $n$) then the sequences $\cF$ and $\cG$
are constant.  Indeed, if this is the case, $h$ is a conjugacy between $f$ and $g_1$ (hence between $f^2$ and $g_1^2$)
and between $f^2$ and $g_2\circ g_1$. By uniqueness of the conjugacies $C^0$-close to identity we conclude that
$g_2=g_1$. Applying this argument recursively we conclude that $\cG=\{g_1\}_n$ is constant.
This fact also shows that some flexibility in the definition of conjugacies for sequential dynamics would be necessary.
The flexibility of the concept of sequential conjugacies, for dynamics acting on different metric spaces,
allowed us to describe a leafwise shadowing property for invariant foliations of partially hyperbolic dynamics \cite{CRV1}.

\subsection{Shadowing }\label{subsec:shadow}
Our first main results concern the stability of Anosov  sequences.
Fix $r\ge 1$, let $(X_n)_n$ be a sequence of compact Riemannian manifolds
and let $\mathcal S^r ((X_n)_n)$ denote the space of sequences $\{f_n\}_n$
of $C^r$-differentiable maps $f_n: X_n \to X_{n+1}$. We say that a sequence
 $\mathcal F=\{f_n\}_{n\in \mathbb Z}$ is an \emph{Anosov sequence} if
there exists $a>0$, for every $n\in \mathbb Z$ there exists a continuous decomposition of the
tangent bundle $T X_n=E_n^+ \oplus E_n^-$ (of constant dimension),
there exist cone fields
$$
\mathcal C^+_{a,n}(x) =\big\{ v = v^+ + v^- \in E^+_n(x) \oplus E^-_n(x) \colon \| v^- \| \le a \| v^+\| \big\}
$$
and
$$
\mathcal C^-_{a,n}(x) =\big\{ v = v^+ + v^- \in E^+_n(x) \oplus E^-_n(x) \colon \| v^+ \| \le a \| v^-\| \big\}
$$
and constants $\lambda_n \in (0,1)$ so that:
\begin{itemize}
\item[a)]
	$Df_n (x) \, \mathcal C^+_{a,n}(x) \subset \mathcal C^+_{\lambda_n a, \, n+1}(f_n(x))$ and
	$Df_n (x)^{-1} \, \mathcal C^-_{a,n+1}(f_n(x)) \subset \mathcal C^-_{\lambda_n a,\,  n}(x)$
\item[b)]
	$\|Df_n^{-1}(x) v \| \ge \lambda_n^{-1} \|v\|$ for every $v\in \mathcal C^-_{a,n+1}(f_n(x))$ and  \\
	 $\|Df_n(x) v \| \ge \lambda_n^{-1} \|v\|$ for every $v\in \mathcal C^+_{a,n}(x) $
\end{itemize}
for every $x\in X_n$ and $n\in \mathbb Z_+$. We refer to $a>0$ as the \emph{diameter} of the cone fields.
It is clear that if  $\mathcal F$ is an Anosov sequence then  $\mathcal F^{(k)}$ is also an Anosov
sequence for every $k\in\mathbb Z$.
Moreover, a constant sequence $\{f\}_{n\in \mathbb Z}$ is an Anosov sequence if and only if the diffeomorphism $f$
is Anosov. Here we will be interested in Anosov sequences formed by diffeomorphisms in a $C^1$-neighborhood of
some fixed Anosov diffeomorphism.

As uniform hyperbolicity can be characterized by the existence of stable and unstable cone fields,
if $f$ is an Anosov diffeomorphism there exists a $C^1$-open neighborhood $\cU$ of $f$ such that
every sequence $\cF=\{f_n\}$ formed by elements of $\cU$ is an Anosov sequence.
We refer the reader to \cite{KH} for the $C^1$-robustness and stability of Anosov diffeomorphisms, and to
Subsection~\ref{sec:geometric} for some of the geometrical properties of Anosov sequences.

In order to state our  main results on shadowing and stability of non-autonomous dynamical systems we recall
some necessary notions.
Given $\delta>0$, we say that $(x_n)_{n=0}^k$ is a
\emph{$\delta$-pseudo orbit} for $\mathcal F=\{f_n\}_{n\in \mathbb Z_+}$
if $x_n \in X_n$ and $d( f_n(x_n), x_{n+1} )<\delta$ for every $0\leq n\leq k-1$.
We say that the sequence of maps $\mathcal F=\{f_n\}_{n\in \mathbb Z_+}$ has the \emph{shadowing property}
if for every $\varepsilon>0$ there exists $\delta>0$ such that for any $\delta$-pseudo-orbit
$(x_n)_{n=0}^k$ there exists $x\in X$ so that its $\mathcal F$-orbit $\varepsilon$-shadows the
sequence $(x_n)_{n=0}^k$, that is,
$
d(  F_n (x), x_{n+1})<\varepsilon
\,\text{ for every } 0\leq n \leq k-1.
$
Moreover, we say that $\mathcal F=\{f_n\}_{n\in \mathbb Z_+}$ has the \emph{Lipschitz shadowing property} if there exists a uniform
constant $L>0$ so that one can choose $\delta=L \vep$ above.
Finally, if $X_n=X$ for every $n$, we say that the sequence $\mathcal F=\{f_n\}_{n\in \mathbb Z_+}$ has the
\emph{periodic shadowing property} if for any $\vep>0$ there exists $\delta>0$ so that any
$\delta$-pseudo orbit $(x_n)_{n=0}^k$ satisfying $x_0=x_k$ is $\vep$-shadowed by a fixed point $x\in X$ for
$F_k=f_k \circ \dots \circ f_2\circ f_1\circ f_0$. The previous notions are often referred as
finite shadowing properties since consider finite pseudo-orbits. Nevertheless, in locally compact spaces it is
a well known fact that the finite shadowing orbit property is equivalent to the shadowing  property using infinite pseudo-orbits.

\subsection{Topological entropy }
In their seminal work, Kolyada and Snoha~\cite{K-S} introduced and studied a concept of entropy for
non-autonomous dynamical systems and prove, among other results, that the entropy is concentrated
in the non-wandering set. The non-wandering set for non autonomous dynamical systems is a compact set
but, in general, it misses to be invariant by the sequence of dynamical systems. This makes the problems
of proving stability and finding conjugacies for nearby dynamics a hard topic. Let us recall some necessary results
from \cite{K-S}.
Let $\cF=\{f_n\}_n$ be a sequence of maps on a compact metric space $(X,d)$. For every $n\ge 1$ consider the
distance $d_n(x,y):=\max_{0\le j \le n-1} d( F_j(x), F_j(y))$. A set $E\subset X$ is $(n,\vep)$-\emph{separated}
for $\cF$ if $d_n(x,y)>\vep$ for every $x,y\in E$ with $x\neq y$. For $Z\subset X$ define
$$
s_n(\cF,\vep, Z)= \max\{\# E \colon E \; \text{is a}\; (n,\vep) \, \text{separated set in} \, Z\}
$$
and the \emph{topological entropy of $\cF$ on $Z\subset X$} by
\begin{equation}\label{def:entropy}
h_{Z}(\cF)
	= \lim_{\vep\to 0}\limsup_{n\to\infty} \frac1n \log s_n(\cF,\vep, Z),
\end{equation}
and set $h_{top}(\cF)=h_{X}(\cF)$.

The topological entropy of sequences $\cF=\{f_n\}_n$ and $\cG=\{g_n\}_n$ is not determined by the individual dynamics.
Indeed, there are examples where each $f_n$ and $g_n$ are topologically conjugate for all $n$ but $h_{top}(\cF) \neq h_{top}(\cG)$
(cf. \cite[Section 5]{K-S}).

A pair of sequences $\cF=\{f_n\}_n$ and $\cG=\{g_n\}_n$ is \emph{equiconjugate} if there
exists a sequence of homeomorphism $(\tilde h_n)_n$ such that:\,  (i) $\tilde h_{n+1}\circ f_n= g_n\circ \tilde h_n$, and
(ii) the sequences $(\tilde h_n)_n$ and $(\tilde h_n^{-1})_n$ are equicontinuous. It is not hard to check that if $f$ is structurally stable
and $(f_n)_n$ are $C^1$-close and convergent to $f$ then $\cF=\{f\}_n$ and $\cG=\{f_n\}_n$ are equiconjugate.
Moreover, the following holds:

\begin{proposition}\cite[Theorem~B]{K-S}\label{propKS}
Let $\cF=\{f_n\}_n$ and $\cG=\{g_n\}_n$ be sequences of continuous maps on a compact metric space $X$ and $Y$, respectively.
If $\cF$ and $\cG$ are equiconjugate then $h_{top}(\cF) =h_{top}(\cG)$.
\end{proposition}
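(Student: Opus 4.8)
The plan is to show that equiconjugacy gives a uniform (in $n$ and $\vep$) comparison between $(n,\vep)$-separated sets for $\cF$ and for $\cG$, from which equality of the topological entropies follows by taking the limits in~\eqref{def:entropy}. First I would fix the equiconjugacy $(\tilde h_n)_n$ with $\tilde h_{n+1}\circ f_n = g_n\circ \tilde h_n$, which by induction yields the intertwining relation $\tilde h_n \circ F_n = G_n \circ \tilde h_0$ for every $n\ge 0$ (here $F_n,G_n$ are the $n$-fold concatenations). The key quantitative input is the equicontinuity of the families $(\tilde h_n)_n$ and $(\tilde h_n^{-1})_n$: given $\vep>0$ there exists $\eta=\eta(\vep)>0$, independent of $n$, such that $d_Y(\tilde h_n(x),\tilde h_n(y))>\vep$ forces $d_X(x,y)>\eta$, and symmetrically for the inverses. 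Compactness of $X$ and $Y$ is what makes this uniform choice of $\eta$ possible.

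Next I would translate this into a statement about the Bowen metrics $d_n^{\cF}(x,y)=\max_{0\le j\le n-1} d_X(F_j(x),F_j(y))$ and $d_n^{\cG}(u,v)=\max_{0\le j\le n-1} d_Y(G_j(u),G_j(v))$. Suppose $E\subset X$ is $(n,\vep)$-separated for $\cF$. Then for distinct $x,y\in E$ there is some $0\le j\le n-1$ with $d_X(F_j(x),F_j(y))>\vep$; applying $\tilde h_j$ and the uniform equicontinuity of the inverse family, $d_Y(\tilde h_j(F_j(x)),\tilde h_j(F_j(y)))>\eta(\vep)$, and by the intertwining relation $\tilde h_j\circ F_j = G_j\circ \tilde h_0$ this says $d_Y(G_j(\tilde h_0(x)),G_j(\tilde h_0(y)))>\eta(\vep)$. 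Hence $\tilde h_0(E)$ is $(n,\eta(\vep))$-separated for $\cG$, and since $\tilde h_0$ is a bijection, $s_n(\cF,\vep,X)\le s_n(\cG,\eta(\vep),Y)$. Taking $\frac1n\log$, then $\limsup_{n\to\infty}$, and finally $\vep\to 0$ (noting $\eta(\vep)\to 0$ as $\vep\to 0$, or more simply that the $\vep\to0$ limit in the definition of entropy is monotone, so any cofinal sequence of scales suffices) gives $h_{top}(\cF)\le h_{top}(\cG)$. The reverse inequality is obtained identically by interchanging the roles of $\cF$ and $\cG$, using the family $(\tilde h_n^{-1})_n$ and its (uniform) equicontinuity, and the conclusion $h_{top}(\cF)=h_{top}(\cG)$ follows.

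I expect the only genuine subtlety — hardly an obstacle, given the hypotheses — to be bookkeeping the order of quantifiers so that the comparison scale $\eta(\vep)$ is chosen uniformly in $n$; this is precisely where equicontinuity (as opposed to mere continuity of each $\tilde h_n$) is used, and it is the reason the definition of equiconjugacy builds it in. Everything else is a routine transport of separated sets along a conjugacy, exactly as in the classical invariance of topological entropy under conjugacy, with the concatenations $F_n,G_n$ playing the role of the iterates $f^n,g^n$.
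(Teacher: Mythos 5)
Your argument is correct: the intertwining relation $\tilde h_n\circ F_n=G_n\circ \tilde h_0$ follows by induction from (i), the equicontinuity of $(\tilde h_n^{-1})_n$ turns an $(n,\vep)$-separated set for $\cF$ into an $(n,\eta(\vep))$-separated set for $\cG$ via the bijection $\tilde h_0$, and symmetry gives the reverse inequality. Note that the paper does not prove this proposition at all --- it is quoted directly from Kolyada--Snoha \cite[Theorem~B]{K-S} --- so there is nothing to compare against; your proof is the standard transport-of-separated-sets argument one would expect, and the only cosmetic quibble is that the uniformity of $\eta(\vep)$ in $n$ comes straight from the definition of equicontinuity of the family $(\tilde h_n^{-1})_n$ rather than from compactness per se.
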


%%%%%%%%%%%%%%%%%%%%%%%%
\section{Shadowing for non-autonomous dynamics}\label{sec:shadowingnaut}

%%%%%%%%%%%%%%
\subsection{Stability of non-autonomous expanding maps on compact metric spaces}

Let $(X_n,d_n)$ be a sequence of complete metric spaces and
let $\mathcal F=\{f_n\}_{n\in \mathbb Z_+}$ be a sequence of continuous and onto maps
$f_n: X_n \to X_{n+1}$.
 We say that $\mathcal F$ is a \emph{sequence of expanding maps}
if there are $\delta_0>0$
and a sequence $(\lambda_n)_{n\in \mathbb Z_+}$ of constants in $(0,1)$ so that the following holds:
for any $n\in \mathbb Z_+$, $x\in X_{n+1}$ and $x_i\in f_n^{-1}(x)$ there exists a well defined inverse branch
$f_{n,x_i}^{-1}: B(x,\delta_0) \to V_{x_{i}}$ (open neighborhood of $x_{i}$) so that
$
d (  f_{n,x_i}^{-1}(y),  f_{n,x_i}^{-1}(z) )
	\le \lambda_n \, d(y,z)
$
for every $y,z\in B(x,\delta_0)$
Here, for notational simplicity, we omit the metrics $d_n$ representing them by $d$.
In what follows we observe that any sequence $\mathcal F$ of expansive maps on compact metric spaces admit
a uniform lower bound on the separation time.
More precisely:

\begin{lemma}\label{lemma:expansiveness2}
Let $\mathcal F=\{f_n\}_{n\in\mathbb Z_+}$ be an expansive sequence of continuous maps
$f_n: X_n \to X_{n+1}$ acting  on metric spaces and let $\vep_0$ be an expansiveness constant for $\mathcal F$.
If $X_0$ is compact then for any $\delta>0$ there exists $N\in \mathbb Z_+$ such that, if $x,y\in X_0$ satisfy
$d(F_n(x),F_n(y))<\vep_0$ for every  $0\leq n\leq N$ then  $d(x,y)<\delta$.
\end{lemma}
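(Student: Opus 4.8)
The plan is to prove this by contradiction, using the compactness of $X_0$ together with the continuity of the concatenations $F_m = f_{m-1}\circ \dots \circ f_1 \circ f_0$ and the defining property of expansiveness. First I would suppose the conclusion fails for some $\delta>0$: then for every $N\in\mathbb Z_+$ there are points $x_N,y_N\in X_0$ with $d(F_n(x_N),F_n(y_N))<\vep_0$ for all $0\le n\le N$, yet $d(x_N,y_N)\ge\delta$. Next, using compactness of $X_0$, I would pass to a subsequence $N_k\to\infty$ along which $x_{N_k}\to x$ and $y_{N_k}\to y$ for some $x,y\in X_0$; taking limits in $d(x_{N_k},y_{N_k})\ge\delta$ shows $d(x,y)\ge\delta>0$, so $x\neq y$.

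Then I would invoke expansiveness of $\mathcal F$ (with expansiveness constant $\vep_0$) at the distinct pair $x,y$ to produce a time $m\in\mathbb Z_+$ with $d(F_m(x),F_m(y))>\vep_0$. Since $F_m\colon X_0\to X_m$ is a finite composition of continuous maps, hence continuous, one gets $F_m(x_{N_k})\to F_m(x)$ and $F_m(y_{N_k})\to F_m(y)$, so $d(F_m(x_{N_k}),F_m(y_{N_k}))\to d(F_m(x),F_m(y))>\vep_0$, and therefore $d(F_m(x_{N_k}),F_m(y_{N_k}))>\vep_0$ for all large $k$. On the other hand, for $k$ so large that $N_k\ge m$, the defining inequality for the pair $(x_{N_k},y_{N_k})$ applied at time $n=m$ gives $d(F_m(x_{N_k}),F_m(y_{N_k}))<\vep_0$ — a contradiction. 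This rules out the failure of the statement, and any $N$ then works for the given $\delta$.

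I do not expect a genuine obstacle here; the argument is routine. The only point requiring a little care is the bookkeeping in the last step: one must pick $k$ large enough simultaneously to push $d(F_m(x_{N_k}),F_m(y_{N_k}))$ above $\vep_0$ and to ensure $N_k\ge m$, so that the hypothesis on the pair is available at time $m$. It is also worth observing that the proof uses only continuity of the $f_n$ (no hyperbolicity or expansion), together with compactness of the \emph{initial} space $X_0$, which is exactly what the statement provides.
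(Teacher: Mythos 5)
Your proof is correct and follows essentially the same route as the paper's: argue by contradiction, extract convergent subsequences $x_{N_k}\to x$, $y_{N_k}\to y$ using compactness of $X_0$, note $x\neq y$, and derive a contradiction with $\vep_0$-expansiveness via continuity of the finite compositions $F_m$. The only cosmetic difference is that the paper passes to the limit first to conclude $d(F_n(x),F_n(y))\le\vep_0$ for all $n$ and then contradicts expansiveness, whereas you apply expansiveness first to locate the time $m$ and then contradict the hypothesis at $n=m$; these are logically interchangeable.
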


\begin{proof}
We prove the lemma by contradiction.  Assume there exists $\delta>0$ and, for every $j\ge 0$, there are
$x_j,y_j \in X_0$ with $d(x_j,y_j) >\delta$ and  $d(F_n(x_j),F_n(y_j))<\vep_0$ for every  $0\leq n\leq j$.
Since $X_0$ is compact we may assume (up to consider subsequences) that $x_j \to x \in X_0$
and $y_j \to y \in X_0$.
By continuity, taking $j \to\infty$ we get that $x\ne y$ and
$
d(F_n(x),F_n(y))\le \vep_0 \; \text{for every  $n\in \mathbb Z_+$},
$
which contradicts the expansiveness property. This proves the lemma.
\end{proof}

We can now state our second result on the stability of sequences of expanding maps.

\begin{proposition}[Existence of sequential conjugacies] \label{thm:C}
Let $\mathcal F=\{f_n\}_{n\in \mathbb Z_+}$ be a sequence of $C^1$-expanding maps acting on compact Riemannian manifolds $X_n$ with
contraction rates $(\lambda_n)_{n\in \mathbb Z_+}$ for inverse branches satisfying $\sup_{n\in \mathbb Z_+} \lambda_n <1$.
There exists $\vep>0$ so that, for any sequence $\mathcal G$ of $C^1$ maps satisfying
$|\| \mathcal F - \mathcal G \||<\vep$ there exists a unique homeomorphism
$h=h_{\mathcal F, \mathcal G}: X_0 \to X_0$ so that
\begin{equation}\label{eq:conjugacyn}
h_{ \cG^{(n)},\cF^{(n)}} \circ F_n = G_n \circ h_{\mathcal G,\mathcal F}
	\quad \text{$\forall n\in \mathbb Z_+$,}
\end{equation}
where $h_{\cF^{(n)}, \cG^{(n)}}: X_n \to X_n$ denotes the uniquely homeomorphism determined
by the sequences $\cF^{(n)}$ and $\cG^{(n)}$.
Moreover,  there exists $L>0$ so that $\|h_{\cF, \cG}-id\|_{C^0} \le L |\| \cG - \cF\||$.
\end{proposition}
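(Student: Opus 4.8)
The plan is to realise $h_{\mathcal F,\mathcal G}$ as a shadowing map, exactly as in the Anosov situation of Theorem~\ref{thm:Ainvertivel}, now exploiting the elementary fact that the inverse branches of expanding maps are uniform contractions. Set $\lambda:=\sup_{n}\lambda_n<1$ and $L:=\lambda/(1-\lambda)$. The first step is a Lipschitz shadowing property. Given a $\delta$-pseudo-orbit $(x_n)_{n\ge0}$ for $\mathcal F$ with $\delta$ small, I build a shadowing point backwards: for a finite truncation $(x_n)_{n=0}^{k}$ put $z_k:=x_k$ and, going down, $z_{n-1}:=f_{n-1,\,x_{n-1}}^{-1}(z_n)$, where $f_{n-1,\,x_{n-1}}^{-1}$ is the inverse branch of $f_{n-1}$ based at the preimage $x_{n-1}$ of $f_{n-1}(x_{n-1})$. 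Since $d(f_{n-1}(x_{n-1}),z_n)\le\delta+d(x_n,z_n)$, this branch is defined at $z_n$ as soon as $d(x_n,z_n)<\delta_0-\delta$, and the contraction bound yields $d(x_{n-1},z_{n-1})\le\lambda(\delta+d(x_n,z_n))$, whence $d(x_n,z_n)\le L\delta$ for every $n$ provided $L\delta<\delta_0-\delta$. One checks $F_n(z_0)=z_n$, so $z_0$ is an $L\delta$-shadowing point; the finite truncations converge (the contractions decay geometrically), giving shadowing of one-sided infinite pseudo-orbits as well. Uniqueness of the shadowing point for small $\delta$ is immediate from positive expansiveness of $\mathcal F$ (two orbits staying $\delta_0$-close forever coincide — see Lemma~\ref{lemma:expansiveness2}), a property that for $C^1$-expanding maps with $\sup_n\lambda_n<1$ is robust under $C^1$-perturbation and inherited by all shifts. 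Fixing $\varepsilon>0$ small enough that every $C^1$-$\varepsilon$-perturbation of $\mathcal F$ is again a sequence of expanding maps with the same uniform constants $\lambda$, $L$ and expansiveness constant $\varepsilon_0$ completes this step.

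Now fix $\mathcal G$ with $|\|\mathcal F-\mathcal G\||<\varepsilon$ and $x\in X_0$. From $d(f_n(G_n(x)),G_{n+1}(x))=d(f_n(G_n(x)),g_n(G_n(x)))\le\|f_n-g_n\|_{C^0}\le|\|\mathcal F-\mathcal G\||=:\delta$ we see that the $\mathcal G$-orbit $(G_n(x))_{n\ge0}$ is a $\delta$-pseudo-orbit for $\mathcal F$, so I may \emph{define} $h_{\mathcal F,\mathcal G}(x)\in X_0$ to be the unique point with $d(F_n(h_{\mathcal F,\mathcal G}(x)),G_n(x))\le L\delta$ for all $n\ge0$. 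Taking $n=0$ gives $d(h_{\mathcal F,\mathcal G}(x),x)\le L\delta=L|\|\mathcal F-\mathcal G\||$, the claimed $C^0$-estimate. Continuity of $h_{\mathcal F,\mathcal G}$ follows from compactness and uniqueness: any accumulation point of $h_{\mathcal F,\mathcal G}(x_j)$ along $x_j\to x$ again $L\delta$-shadows $(G_n(x))_n$, hence equals $h_{\mathcal F,\mathcal G}(x)$. The same recipe applied to the pair $(\mathcal G,\mathcal F)$ gives a map $h_{\mathcal G,\mathcal F}:X_0\to X_0$, and positive expansiveness forces $h_{\mathcal F,\mathcal G}\circ h_{\mathcal G,\mathcal F}=\mathrm{id}$ and, symmetrically, $h_{\mathcal G,\mathcal F}\circ h_{\mathcal F,\mathcal G}=\mathrm{id}$ (if $w:=h_{\mathcal G,\mathcal F}(x)$ and $z:=h_{\mathcal F,\mathcal G}(w)$ then $d(F_n(z),F_n(x))\le 2L\delta$ for all $n$ by the triangle inequality through $G_n(w)$, so $z=x$); hence $h_{\mathcal F,\mathcal G}$ is a homeomorphism with inverse $h_{\mathcal G,\mathcal F}$. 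Uniqueness in the statement follows along the same lines: \eqref{eq:conjugacyn} together with the $C^0$-estimates on the maps $h_{\mathcal G^{(n)},\mathcal F^{(n)}}$ forces the $\mathcal G$-orbit of $h_{\mathcal G,\mathcal F}(x)$ to stay $L\delta$-close to $(F_n(x))_n$, and expansiveness then pins it down.

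It remains to check the compatibility relation \eqref{eq:conjugacyn}. Fix $n\in\mathbb Z_+$ and $x\in X_0$, and set $w:=h_{\mathcal G,\mathcal F}(x)$, so $d(G_m(w),F_m(x))\le L\delta$ for all $m\ge0$. The $\mathcal G^{(n)}$-orbit of $G_n(w)\in X_n$ is $(G_{n+k}(w))_{k\ge0}$ and the $\mathcal F^{(n)}$-orbit of $F_n(x)\in X_n$ is $(F_{n+k}(x))_{k\ge0}$, so $G_n(w)$ is a point whose $\mathcal G^{(n)}$-orbit $L\delta$-shadows the $\mathcal F^{(n)}$-orbit of $F_n(x)$; since $|\|\mathcal G^{(n)}-\mathcal F^{(n)}\||\le|\|\mathcal G-\mathcal F\||<\varepsilon$, such a point is unique and, by construction, equals $h_{\mathcal G^{(n)},\mathcal F^{(n)}}(F_n(x))$. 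Therefore $h_{\mathcal G^{(n)},\mathcal F^{(n)}}(F_n(x))=G_n(w)=G_n(h_{\mathcal G,\mathcal F}(x))$, which is \eqref{eq:conjugacyn}. I expect the real work to be in the first step — producing the Lipschitz shadowing point with constants uniform over the whole $C^1$-neighbourhood and over all shifts $\mathcal F^{(n)}$, keeping track of which inverse branch is used at each stage so that every iterated inverse branch stays well defined — whereas, once that and robust positive expansiveness are in hand, the construction of $h_{\mathcal F,\mathcal G}$, its bijectivity, continuity, uniqueness and the compatibility relation are formal consequences.
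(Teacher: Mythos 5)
Your proposal is correct and follows essentially the same route as the paper: $h_{\mathcal F,\mathcal G}(x)$ is defined as the unique point whose $\mathcal F$-orbit shadows the pseudo-orbit $(G_n(x))_n$, invertibility comes from expansiveness after exchanging the roles of $\mathcal F$ and $\mathcal G$, and the relation \eqref{eq:conjugacyn} follows from uniqueness of the shadowing point for the shifted sequences. The only differences are minor: you construct the Lipschitz shadowing point explicitly by backward iteration of inverse branches (the paper outsources this step to its shadowing proposition), and you obtain continuity of $h$ from compactness plus uniqueness of the shadowing point rather than from the paper's equicontinuity argument via Lemma~\ref{lemma:expansiveness2}.
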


\begin{proof}
Let $\mathcal F=\{f_n\}_{n\in \mathbb Z_+}$ be as above
and let $\vep_1>0$ be small such that any sequence
of $C^1$ maps $\mathcal G$ satisfying $|\| \mathcal F - \mathcal G \||<\vep_1$ is a sequence of $C^1$-expanding maps
(such a constant exists since the set of expanding maps is $C^1$-open and  $\lambda:=\sup_{n\in \mathbb Z_+} \lambda_n <1$). Reduce
$\vep_1>0$, if necessary, so that every sequence $\cG$
as above is expansive with uniform expansiveness constant $\vep_0>0$.

Fix $0<\vep < \frac14 \min\{\vep_0, \vep_1\}$ and let $L\ge 1$ be given by the Lipschitz shadowing property (cf. Proposition~\ref{prop:A}).
If $\cG$ is any sequence of $C^1$ expanding maps $\mathcal G$ satisfying $|\| \mathcal F - \mathcal G \||<\vep/L$
and $x\in X_0$ then the sequence $(G_n(x))_{n\in \mathbb Z_+}$ forms a $\vep/L$-pseudo orbit with respect to the sequence $\mathcal F$, as
$$
d( f_n ( G_n(x) ) , G_{n+1}(x) )
	= d( f_n ( G_n(x) ) , g_n (G_n(x)) ) \le \|f_n -g_n \|_{C^0} < \frac{\vep}{L}
$$
for every $n\in \mathbb Z_+$.
Hence, there exists a unique point $h_{\mathcal F,\mathcal G}(x) \in X_0$ so that
\begin{equation}\label{conjug}
d (F_n (h_{\mathcal F,\mathcal G}(x)),  \, G_n(x) ) <\vep
	\quad\text{ for every } n\in \mathbb Z_+
\end{equation}
(see Figure~\ref{fig2} below).
Reversing the role of $\cF$ and $\mathcal G$ and replacing $x$ by $h_{\mathcal F,\mathcal G}(x)$,
we deduce
that there exists a unique point $h_{\mathcal G,\mathcal F}( h_{\mathcal F,\mathcal G}(x) ) \in X_0$ so that
$$
d (G_n (h_{\mathcal G,\mathcal F}(h_{\mathcal F,\mathcal G}(x))),  \, F_n(h_{\mathcal F,\mathcal G}(x)) ) <\vep
	\quad\text{ for every } n\in \mathbb Z_+.
$$
\begin{figure}[htb]\label{fig2}
\begin{center}
  \includegraphics[width=11cm,height=4.3cm]{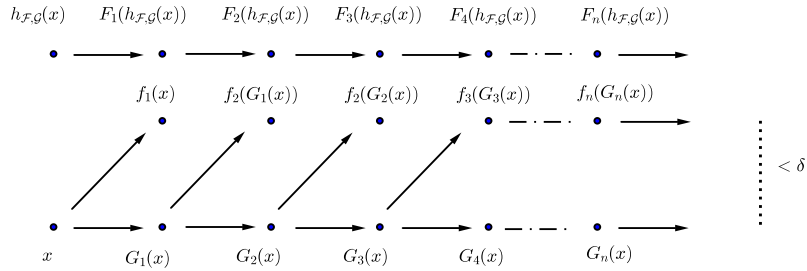}
\caption{$(G_n(x))_{n\ge 0}$ as $\delta$-pseudo-orbit with respect to $\mathcal F=\{f_n\}_{n\in \mathbb Z_+}$
	and shadowing point $h_{\mathcal F,\mathcal G}(x)\in X_0$}
\label{figure}
\end{center}
\end{figure}
As $|\| \mathcal F - \mathcal G\||<\vep$, by triangular inequality we get
$
d (G_n (h_{\mathcal G,\mathcal F}(h_{\mathcal F,\mathcal G}(x))), G_n(x))<2\vep <\vep_0
$
for every $n\in \mathbb Z_+$.
Since $\vep_0$ is an expansiveness constant for $\cG$, the latter assures that
 $h_{\mathcal G,\mathcal F}(h_{\mathcal F,\mathcal G}(x))=x$, proving that $h_{\mathcal F,\mathcal G}$ is invertible and
 $h_{\mathcal F,\mathcal G}^{-1}=h_{\mathcal G,\mathcal F}$.
\begin{figure}[htb]\label{fig3}
\begin{center}
  \includegraphics[width=11cm,height=4.3cm]{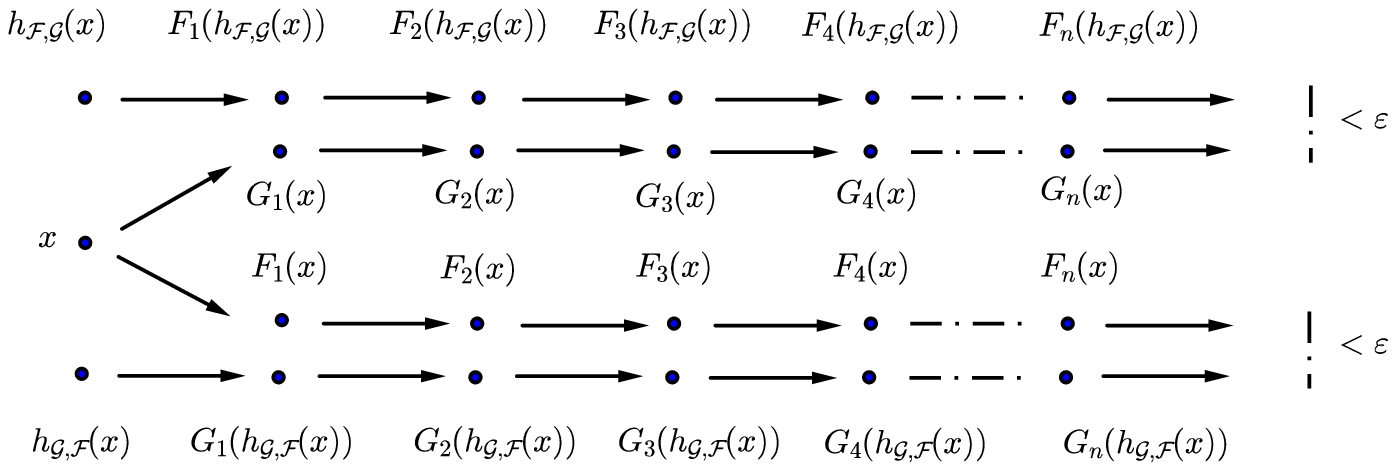}
\caption{Shadowing points $h_{\cF,\cG}(x)$ and $h_{\cG,\cF}(x)$ on $X_0$}
\label{figure}
\end{center}
\end{figure}
 Moreover, taking $n=0$ in \eqref{conjug} we get that
$\|h_{\cF,\cG}-id_{X_0}\|_{C^0} \leq \vep$.
Now we prove that $h=h_{\mathcal F, \mathcal G}: X_0 \to X_0$ is an homeomorphism.
Take $0<\delta<\vep_0/4$ and let $N=N(\delta)\ge 1$ given by Lemma~\ref{lemma:expansiveness2}.
As the spaces $X_n$ are compact, the set of functions $\{ G_1, \dots, G_N\}$ is equicontinuous:
there exists $\eta>0$ so that if $d(x,y)<\eta$ then $d(G_n(x),G_n(y))<\delta$ for every $0\le n \le N$.
Thus, if $d(x,y)<\eta$ and $0\le n \le N$ then
\begin{align*}
d(F_n(h_{\mathcal F,\mathcal G}(x)),F_n(h_{\mathcal F,\mathcal G}(y)))
          & \le d(F_n(h_{\mathcal F,\mathcal G}(x)),G_n(x))+d(F_n(h_{\mathcal F,\mathcal G}(y)),G_n(y))\\
          &+d(G_n(x),G_n(y))
          \leq 2 \vep +\delta
          < \vep_0.
\end{align*}
Lemma~\ref{lemma:expansiveness2} implies $d(h_{\mathcal F, \mathcal G}(x),h_{\mathcal F, \mathcal G}(y))<\delta$
and the continuity of  $h_{\mathcal F, \mathcal G}$ follows. By a similar argument, or using the fact that $X_0$ is a compact
metric space and  $h_{\mathcal F, \mathcal G}$ is a continuous bijection, we conclude the continuity of its inverse
$h_{\mathcal G, \mathcal F}$.

Finally, we are left to prove the conjugacy relation ~\eqref{eq:conjugacyn}.
Clearly $|\| \cF^{(n)} - \cG^{(n)} \|| <\vep/L$ for every $n\in \mathbb Z_+$.
Recalling that
$
F^{(n)}_k = f_{n+k-1} \dots f_{n}
$ and
$	G^{(n)}_k = g_{n+k-1} \dots g_{n}$
for every $k\ge 0$,
we note that similar computations as before yield that the orbit of the point
$F_n(h_{\mathcal F, \mathcal G}(x) \in X_n$ by the sequence $\cF^{(n)}$
is an $\vep/L$-pseudo-orbit with respect to the sequence $\cG^{(n)}$. In particular, there exists a unique point
$h_{\cG^{(n)}, \cF^{(n)}}(\,  F_n(h_{\mathcal F, \mathcal G}(x))  \, ) \in X_n$ for which
\begin{equation}\label{eq:shadowingshifted}
d(  G_k^{(n)} (\,  h_{\cG^{(n)}, \cF^{(n)}}( F_n(h_{\mathcal F, \mathcal G}(x))  \, ) ,
	  F_k^{(n)}( F_n(h_{\mathcal F, \mathcal G}(x)) )  < \vep
\end{equation}
for every $k\ge 0$. Combining equations \eqref{conjug} and ~\eqref{eq:shadowingshifted} (recall that
 $F_k^{(n)}( F_n) = F_{n+k}$ and $G_k^{(n)}( G_n) = G_{n+k}$ for every $k\ge 0$),
by triangular inequality
$$
d( G^{(n)}_k (\,  h_{\cG^{(n)}, \cF^{(n)}}( F_n(h_{\mathcal F, \mathcal G}(x))  \, )) ,
	 G^{(n)}_k (G_n(x)) )  < 2\vep <\vep_0
$$
for every $k\ge 0$. Using once more that $\vep_0$ is an expansiveness constant for the sequence $\cG^{(n)}$
we deduce that
$
h_{\cG^{(n)}, \cF^{(n)}} \circ F_n \circ h_{\mathcal F, \mathcal G}(x) = G_n(x)$ for every $n\in \mathbb Z_+$ and
every $x\in X_0$. In other words,
$
h_{\cG^{(n)}, \cF^{(n)}} \circ F_n (x) = G_n\circ h_{\mathcal G, \mathcal F}$
for all $n\in \mathbb Z_+$,
which finishes the proof of the propostion.
\end{proof}

\begin{remark}\label{rmk:conjugacy}
Proposition~\ref{thm:C} means that the conjugacies $(h_{\cF^{(n)}, \cG^{(n)}})_{n\in \mathbb Z_+}$
improve (meaning that $h_{\mathcal F, \mathcal G}$ is $C^0$-convergent to the identity map) in the case
that $|\| \cF^{(n)} - \cG^{(n)}\|| \to 0$ as $n$ tends
to infinity.
If $f,g$ are $C^1$-expanding maps on a compact Riemannian manifold $X$,
$\cF=\{f\}_{n\in \mathbb Z_+}$ and $\cG=\{g\}_{n\in \mathbb Z_+}$ then
$F_n=f^n$ and  $G_n=g^n$ for every $n\in \mathbb Z_+$ and the sequence of conjugacies
$(h_{\cF^{(n)}, \cG^{(n)}} )_{n\in \mathbb Z_+}$ is constant to the conjugacy $h_{\mathcal F, \mathcal G}$ between
$f$ and $g$.
More generally, if there exists $N\ge 1$ so that $\cF^{(N)}=\cF$ (e.g. $\cF=\{f,g,f,g, f,g,\dots\}$ and $N=2$)
then the conjugacies $(h_n)_{n\in \mathbb Z_+}$ are $N$-periodic: $h_{n+N}=h_n$ for every $n\in \mathbb Z_+$.
\end{remark}

In the special case of periodic sequences of expanding maps we derive the following:

\begin{maincorollary}\label{cor:periodic}
Assume that $f$ is a $C^1$-expanding map on a compact Riemannian manifold $X$ and let
$\mathcal F=\{f_n\}_n$ be a $N$-periodic sequence of expanding maps such that
$\cF$ and $\cG=\{f\}_{n\in\mathbb Z}$ are sequentially conjugate. Then there are homeomorphisms $(h_i)_{i=0\dots N-1}$
so that
$$
\lim_{n\to\infty} \frac1n \sum_{j=0}^{n-1} \delta_{F_j(x)}
	 = \frac1N\sum_{i=0}^{N-1} (h_i)_*\mu
	 \quad \text{for $\mu$-a.e. $x\in X$.}
$$
\end{maincorollary}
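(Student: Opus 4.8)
The strategy is to reduce the statement to an application of Birkhoff's ergodic theorem for the single expanding map $f$, using the sequential conjugacy and the $N$-periodicity to organize the concatenations $F_j$ along arithmetic progressions modulo $N$. First I would unwind the hypotheses: since $\cF$ and $\cG=\{f\}_n$ are sequentially conjugate, there is a sequence $(h_n)_{n}$ of homeomorphisms with $h_n\circ F_n=f^n\circ h_0$, equivalently $F_n=h_n^{-1}\circ f^n\circ h_0$ for all $n\in\mathbb Z_+$. By the $N$-periodicity of $\cF$ we have $\cF^{(N)}=\cF$, so by the uniqueness clause in Proposition~\ref{thm:C} (and the observation in Remark~\ref{rmk:conjugacy}) the conjugacies are $N$-periodic, $h_{n+N}=h_n$; thus only finitely many homeomorphisms $h_0,h_1,\dots,h_{N-1}$ appear. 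Writing $j=qN+i$ with $0\le i\le N-1$ gives $F_j=h_i^{-1}\circ f^{qN+i}\circ h_0$, so that $\delta_{F_j(x)}=\bigl(h_i^{-1}\circ f^{i}\bigr)_*\,\delta_{f^{qN}(h_0(x))}$.

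\textbf{Key steps.} With this, split the empirical average along the $N$ residue classes:
\begin{equation*}
\frac1n\sum_{j=0}^{n-1}\delta_{F_j(x)}
= \frac1n\sum_{i=0}^{N-1}\; \bigl(h_i^{-1}\circ f^{i}\bigr)_*
\Bigl(\sum_{q\,:\,qN+i<n}\delta_{f^{qN}(h_0(x))}\Bigr).
\end{equation*}
For each fixed $i$, the inner sum, once divided by the number of its terms ($\sim n/N$), is a Birkhoff average for the map $f^N$ at the point $f^{?}(h_0(x))$ (here one must keep track of whether the orbit of $h_0(x)$ under $f^N$ is the relevant one; in fact $f^{qN}(h_0(x))=(f^N)^q(h_0(x))$, so the point is always $h_0(x)$ and the dynamics is $f^N$). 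Now apply Birkhoff's ergodic theorem to $f^N$: if $\mu$ is $f$-invariant and ergodic, it need not be ergodic for $f^N$, but it decomposes into at most $N$ ergodic components for $f^N$ that are cyclically permuted by $f$; averaging over the $N$ shifts $(h_i^{-1}\circ f^i)_*$ exactly symmetrizes this, and one recovers $\frac1N\sum_{i=0}^{N-1}(h_i^{-1}\circ f^i)_*\mu$ for $\mu$-a.e. starting point. Relabel $h_i:=h_i^{-1}\circ f^i$ (or absorb $f^i$ appropriately; the statement only asserts existence of \emph{some} homeomorphisms $h_0,\dots,h_{N-1}$) to match the displayed conclusion, and note the a.e. set is pushed forward by $h_0^{-1}$, which is a homeomorphism and hence preserves the relevant full-measure condition once one observes $\mu$ is $f$-invariant.

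\textbf{Main obstacle.} The delicate point is the interaction between ergodicity of $\mu$ for $f$ and the failure of ergodicity for $f^N$: one must argue that averaging the pushforwards $(h_i^{-1}\circ f^i)_*$ over $i=0,\dots,N-1$ cancels the dependence on which ergodic component of $f^N$ the point $h_0(x)$ lands in, so that the limit is the clean expression $\frac1N\sum_i (h_i)_*\mu$ for $\mu$-a.e.\ $x$ rather than a point-dependent combination. The clean way to handle this is: let $\nu$ be an ergodic component of $\mu$ for $f^N$, so that $\mu=\frac1k\sum_{\ell=0}^{k-1}(f^\ell)_*\nu$ for some $k\mid N$ with $(f^k)_*\nu=\nu$; apply Birkhoff for $f^N$ (equivalently $(f^N)$ acting with $\nu$ as one of finitely many permuted ergodic pieces) and then regroup the sum over $i$ using periodicity of $h_i$ to see that the $i$-sum reconstitutes $\mu$ from $\nu$. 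Everything else — the reduction via the conjugacy, the $N$-periodicity of $(h_n)$, the weak-$*$ convergence of empirical measures from scalar Birkhoff averages against a countable dense set of observables, and passing the full-measure set through the homeomorphism $h_0$ — is routine.
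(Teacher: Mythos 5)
Your reduction is the same as the paper's: use the sequential conjugacy to write $F_j=h_j^{-1}\circ f^j\circ h_0$, invoke the $N$-periodicity of the conjugacies (Remark~\ref{rmk:conjugacy}) to keep only $h_0,\dots,h_{N-1}$, split the empirical average over residues $i$ modulo $N$ so that each block is a Birkhoff average for $f^N$ at the point $h_0(x)$, and use the $f$-invariance of $\mu$ to absorb the factors $(f^i)_*$. Up to the harmless opposite convention for the direction of the conjugacy and the final relabelling of the homeomorphisms, this is exactly the computation in the paper.

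The genuine gap is in your ``main obstacle'' paragraph. The paper never faces that obstacle: its proof explicitly takes $\mu$ to be $f$-invariant and \emph{ergodic with respect to $f^N$}, so the inner Birkhoff averages converge to $\mu$ itself and the argument closes. You instead assume $\mu$ is only $f$-ergodic and claim that averaging the pushforwards $(h_i^{-1}\circ f^i)_*$ over $i=0,\dots,N-1$ cancels the dependence on the $f^N$-ergodic component of $h_0(x)$. It does not. If $\mu=\frac1k\sum_{\ell=0}^{k-1}(f^\ell)_*\nu$ with $(f^k)_*\nu=\nu$ and $1<k\mid N$, then for $y=h_0(x)$ in the component $(f^m)_*\nu$ the limit is $\frac1N\sum_{i=0}^{N-1}(h_i^{-1})_*(f^{i+m})_*\nu$; since the homeomorphisms $h_0^{-1},\dots,h_{N-1}^{-1}$ are in general pairwise distinct, changing $m$ permutes which component is paired with which $h_i^{-1}$ and changes the resulting measure, so no regrouping ``reconstitutes $\mu$ from $\nu$.'' Concretely, for $N=2$ and $\mu=\frac12(\delta_p+\delta_{f(p)})$ on a period-two orbit, the two candidate limits are $\frac12(\delta_{h_0^{-1}(p)}+\delta_{h_1^{-1}(f(p))})$ and $\frac12(\delta_{h_0^{-1}(f(p))}+\delta_{h_1^{-1}(p)})$, and neither equals $\frac12\big((h_0^{-1})_*\mu+(h_1^{-1})_*\mu\big)$, which is an average of four point masses of weight $\frac14$. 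So the stronger statement you aim at is false, and the hypothesis that $\mu$ be ergodic for $f^N$ (as in the paper's proof) is genuinely needed. A secondary imprecision, which the paper shares: the full-measure set of good points $x$ is full for $(h_0^{-1})_*\mu$ (equivalently $(h_0)_*\mu$ in the paper's convention), not for $\mu$; $h_0$ need not preserve $\mu$ and the $f$-invariance of $\mu$ does not repair this.
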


\begin{proof}
Assume that $\cF=\{f_n\}_{n\in \mathbb Z}$ is a $N$-periodic sequence (that is, $f_{n+N}=f_n$ for every $n\in\mathbb Z$),
and that $\cF$ and $\cG=\{f\}_{n\in\mathbb Z}$ are sequentially conjugate: for every $n$ there exists a homeomorphism $h_n$
so that
$
F_n=h_n \circ f^n \circ h_0^{-1}.
$
 By Remark~\ref{rmk:conjugacy} the sequence
of conjugacies $(h_n)_n$ is also $N$-periodic.
Let $\mu$ be a $f$-invariant probability (hence $f^N$-invariant) that is ergodic with respect to $f^N$. If $\phi\in C^0(X)$,
$\tilde \mu:=(h_0)_*\mu$ and $h_0(y)=x$ then
\begin{align}
\sum_{j=0}^{n-1} \phi(F_j(x))
	& = \sum_{j=0}^{n-1} \phi \circ h_j \circ f^j (h_0^{-1}(x)) \nonumber \\
	& = \sum_{i=0}^{N-1} \sum_{\ell=0}^{\big[\frac{n}{N}\big]} \sum_{0\le \ell N+i \le n-1}(\phi \circ h_i) ( f^{\ell N+i}(y))
	\nonumber\\
	& = \sum_{i=0}^{N-1} \sum_{\ell=0}^{\big[\frac{n}{N}\big]} \sum_{0\le \ell N+i \le n-1} \psi_i( f^{\ell N}(y))
	\label{eq:estimate-periodic}
\end{align}
for $\tilde\mu$-almost every $y$, where $\psi_i=\phi \circ h_i \circ f^i$ for every $0\le i \le N-1$. Since $\mu$ is
ergodic for $f^N$ then there exists a $\tilde \mu$-full measure subset of points $y$ for which
$
\lim_{k\to\infty } \frac1k\sum_{j=0}^{k-1} \varphi (f^{jN}(y)) =\int \varphi \, d\mu
$
for every continuous $\varphi: X \to\mathbb R$.
Together with ~\eqref{eq:estimate-periodic} and the $f$-invariance of $\mu$,
this proves that
\begin{align}
\lim_{n\to\infty} \frac1n \sum_{j=0}^{n-1} \phi(F_j(x))
	 = \int \phi \; d \Big( \frac1N\sum_{i=0}^{N-1} (h_i)_*\mu\Big)
\end{align}
for $\mu$-almost every $x\in X$ and every $\phi \in C^0(X)$. In other words,
$$
\lim_{n\to\infty} \frac1n \sum_{j=0}^{n-1} \delta_{F_j(x)}
	 = \frac1N\sum_{i=0}^{N-1} (h_i)_*\mu
$$
for $\mu$-almost every $x\in X$.
\end{proof}

The following result asserts that orbits of $\beta$-quasi-conjugate $\mathcal F$ and
$\mathcal G$ (up to the $\beta$-quasi-conjugacy) remain within always within distance $\beta$ from each other
or, equivalently, the orbits are indistinguishable at scale $\beta$. More precisely:

\begin{proposition}[Existence of quasi-conjugacies]\label{thm:quasi-conexp}
Let $\mathcal F=\{f_n\}_{n\in\mathbb Z_+}$ be a sequence of $C^1$-expanding maps acting on a compact Riemannian manifold $X$
with contraction rates of inverse branches $(\lambda_n)_{n\in \mathbb Z_+}$ satisfying $\lambda:= \sup_{n\in \mathbb Z_+} \lambda_n <1$.
Then, for all sufficiently small  $\vep>0$ and  for any sequence of $C^1$ maps $\mathcal G$ satisfying
$|\| \mathcal F - \mathcal G \||<\vep$ there exists a $\frac{2\lambda}{1- \lambda} \vep$-quasi-conjugacy
$h: X \to X$  between $\mathcal F$ and $\mathcal G$.
Moreover, $\|h- id\|_{C^0} \to 0$ as $\mathcal G$ tends to $\mathcal F$.
\end{proposition}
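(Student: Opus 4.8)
The plan is to build the quasi-conjugacy $h$ directly from the inverse branches of $\mathcal{F}$, imitating the classical construction of the conjugacy between two nearby expanding maps, and then to track the constants carefully to get the claimed bound $\frac{2\lambda}{1-\lambda}\vep$. First I would fix $\vep_1>0$ so small that any sequence $\mathcal{G}$ with $|\|\mathcal{F}-\mathcal{G}\||<\vep_1$ is itself a sequence of $C^1$-expanding maps with contraction rates $\le \lambda<1$ for inverse branches on balls of a uniform radius $\delta_0$; this uses that the expanding property is $C^1$-open. Then for $\vep<\vep_1$ small and $x\in X$, I would consider the sequence $(G_n(x))_{n\ge 0}$. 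As in the proof of Proposition~\ref{thm:C}, since $d(f_n(G_n(x)),G_{n+1}(x))=d(f_n(G_n(x)),g_n(G_n(x)))\le\|f_n-g_n\|_{C^0}<\vep$, this is an $\vep$-pseudo-orbit for $\mathcal{F}$, so by the Lipschitz shadowing property (Proposition~\ref{prop:A}, which holds for sequences of $C^1$-expanding maps) there is a unique point $h(x):=h_{\mathcal{F},\mathcal{G}}(x)$ whose $\mathcal{F}$-orbit $L\vep$-shadows it; uniqueness follows from expansiveness. Arguing exactly as in Proposition~\ref{thm:C} gives that $h$ is a homeomorphism and $\|h-\id\|_{C^0}\le L\vep\to 0$ as $\mathcal{G}\to\mathcal{F}$.

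Next I would sharpen the shadowing estimate to get the precise constant. Rather than invoking Lipschitz shadowing as a black box, I would shadow the pseudo-orbit $(G_n(x))_n$ by hand: define $z_n\in X_n$ inductively by applying, at each level, the inverse branch of $f_n$ near $G_n(x)$ to $z_{n+1}$, starting from far ahead and passing to the limit. Concretely, for each $N$ let $z^{(N)}_N=G_N(x)$ and $z^{(N)}_k=f^{-1}_{k,G_k(x)}(z^{(N)}_{k+1})$ for $k<N$; then $d(z^{(N)}_k,z^{(N')}_k)$ is controlled by $\sum_{j\ge k}\lambda^{j-k+1}\cdot(\text{jump sizes})$, the jumps being $d(f_j(G_j(x)),G_{j+1}(x))<\vep$. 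This yields a Cauchy sequence converging to a true $\mathcal{F}$-orbit $(z_k)_k$ with $d(z_k,G_k(x))\le \sum_{j\ge 0}\lambda^{j+1}\vep=\frac{\lambda}{1-\lambda}\vep$ for every $k$; then $z_0$ is the shadowing point $h(x)$. Doing the symmetric construction with $\mathcal{F}$ and $\mathcal{G}$ interchanged and composing via the triangular inequality is what upgrades $\frac{\lambda}{1-\lambda}\vep$ to $\frac{2\lambda}{1-\lambda}\vep$ once one compares $d(F_n(h(x)),G_n(x))$.

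For the quasi-conjugacy relation itself, I would then estimate $d(h(F_n(x)),G_n(h(x)))$ for $n\in\mathbb{Z}_+$. The point $h(x)$ satisfies $d(F_k(h(x)),G_k(x))\le\frac{\lambda}{1-\lambda}\vep$ for all $k$; specializing to $k=n+m$ and using $F_{n+m}=F^{(n)}_m\circ F_n$, $G_{n+m}=G^{(n)}_m\circ G_n$, the $\mathcal{F}^{(n)}$-orbit of $F_n(h(x))$ stays $\le\frac{\lambda}{1-\lambda}\vep$-close to the $\mathcal{G}^{(n)}$-orbit of $G_n(x)$. On the other hand, $h_{\mathcal{F}^{(n)},\mathcal{G}^{(n)}}(G_n(x))$ is by construction the unique point whose $\mathcal{F}^{(n)}$-orbit shadows $(G^{(n)}_m(G_n(x)))_m$, and it also stays $\le\frac{\lambda}{1-\lambda}\vep$-close. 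But rather than working with the sequence of conjugacies, for a single fixed homeomorphism $h=h_{\mathcal{F},\mathcal{G}}$ I would instead directly compare the two $\mathcal{G}$-pseudo-orbits $(G_n(F_m(\cdot)))$ — this is the step to set up carefully. The upshot, using expansiveness of $\mathcal{G}$ and the two-sided bound, is $d(F_n(h(x)),G_n(h(x)))\le \frac{2\lambda}{1-\lambda}\vep$, which after relabeling ($h$ versus $h^{-1}=h_{\mathcal{G},\mathcal{F}}$, as in Proposition~\ref{thm:C}) gives $d_{C^0}(h\circ F_n, G_n\circ h)\le \frac{2\lambda}{1-\lambda}\vep$ for all $n$, i.e. $h$ is a $\frac{2\lambda}{1-\lambda}\vep$-quasi-conjugacy. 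Finally $\|h-\id\|_{C^0}\to 0$ as $\mathcal{G}\to\mathcal{F}$ is already contained in the shadowing estimate above.

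The main obstacle I anticipate is \emph{not} the existence of $h$ — that is essentially Proposition~\ref{thm:C} — but extracting the clean constant $\frac{2\lambda}{1-\lambda}$ rather than an unspecified $L\vep$: this forces the explicit inverse-branch construction and bookkeeping of the geometric series $\sum_{j\ge 0}\lambda^{j+1}$, and it requires checking that the inverse branches used at consecutive levels are the \emph{correct} ones (the branch of $f_k$ based at $G_k(x)$, well-defined since $\vep<\delta_0$) so that the telescoping closes up. Everything else — openness of the expanding condition, expansiveness giving uniqueness, continuity of $h$ and $h^{-1}$ via equicontinuity and Lemma~\ref{lemma:expansiveness2} — is routine and can be imported verbatim from the proof of Proposition~\ref{thm:C}.
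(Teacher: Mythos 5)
Your proposal is correct and follows essentially the same route as the paper: an explicit inverse-branch/nested-balls construction of the shadowing point giving $d \le \sum_{j\ge 0}\lambda^{j+1}\vep=\frac{\lambda}{1-\lambda}\vep$, followed by a triangle inequality that doubles the constant. The one step you flag as needing careful setup is in fact the easiest one, provided you orient $h$ the way the paper does (the $\mathcal G$-orbit of $h(x)$ shadows the $\mathcal F$-orbit of $x$, i.e.\ your $h_{\mathcal G,\mathcal F}$): then $d\bigl(h(F_n(x)),G_n(h(x))\bigr)\le d\bigl(h(F_n(x)),F_n(x)\bigr)+d\bigl(F_n(x),G_n(h(x))\bigr)\le 2\delta$ directly from $\|h-\id\|_{C^0}\le\delta$ and the shadowing bound, with no appeal to expansiveness or to comparing pseudo-orbits; note also that the quantity you write there, $d\bigl(F_n(h(x)),G_n(h(x))\bigr)$, is not the quasi-conjugacy condition.
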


\begin{proof}
Let $\vep>0$ be small enough so that any sequence $\mathcal G$ of $C^1$ maps
satisfying $|\| \mathcal F - \mathcal G \||<\vep$ is a sequence of $C^1$-expanding maps.
Let $\delta_0>0$ be a uniform lower bound for the radius of the inverse branches domain for the
expanding maps all such sequences $\mathcal G$.

Take $0<\delta<\delta_0/2$ and take $\vep := (1- \lambda) \delta/\lambda>0$.
Suppose $|\|\mathcal F- \mathcal G\||< \vep$. Given any $x_0=x \in X$ set $x_n:= F_n(x)$ for every $n\in \mathbb Z_+$.
As before, for every $n\in \mathbb Z_+$ let $f_{n,x_n}^{-1}$ denote the inverse branch of $f_n$ such that
$f_{n,x_{n}}^{-1}(x_{n+1})= x_{n}$.
For notational simplicity, let $g_n^{-1}$ denote the inverse branch of $g_n$, whose domain contains
$B(x_{n+1},\delta)$, which is $C^0$-closer to $f_{n,x_n}^{-1}$.
We claim that $g_n^{-1}(B(x_{n+1}, \delta)) \subset B(x_{n}, \delta)$ for all $n\in \mathbb Z_+$.
In fact, using
$$
d(g_{n}(x_{n}),  x_{n+1}) = d(g_{n}(x_{n}),  f_n(x_{n})) < \vep = \frac{1- \lambda}{\lambda}\delta
$$
we conclude that, for any $z \in B(g_{n}(x_{n}), \vep+ \delta)$ (in particular for points of $B(x_{n+1}, \delta)$),
$$
d(g_n^{-1}(z), x_{n} ) \le \lambda  (\vep+ \delta) \leq (1- \lambda)\delta + \lambda \delta=
\delta.
$$
Hence, if we define $\hat G_n:= g_1^{-1} \circ \dots \circ g_n^{-1}$ then the sets
$Y_n:=  \hat G_n(\overline{B(x_{n+1}, \delta))}$ form a nested sequence of
closed sets with diameter smaller or equal to  $2 \lambda^n \delta$. Thus, there is a
unique point $h(x) \in X$ such that $d(G_n(h(x)), F_n(x)) = d(G_n( h(x)), x_n) \leq \delta$
for every $n \in \mathbb{Z}_+$.
As $x\in X$ was chosen arbitrary, the map $h: X \to X$ defined by the previous construction satisfies
$\|h-id\|_{C^0}<\delta$.
Moreover, by triangular inequality,
$$
d(G_n(h(x)), h (F_n(x)))
	\leq d(G_n( h( x)), F_n(x))+ d(F_n(x), h \circ F_n(x)) \leq 2\delta
	=  \frac{2\lambda}{1- \lambda} \vep.
$$
We proceed to prove the continuity of $h$. Given  $\tilde \vep> 0$  take $N> 0$ such that  $2\la^{N} \delta< \tilde \vep$.
Now, by uniform continuity of the maps $F_j$ with $j\le N$ there exists $\tilde \delta> 0$ such that
if $d(x, y)< \tilde \delta$ then
$d(F_j(x), F_j(y))< \delta \; \text{for every $j= 0, \dots, N$.}$
 So,
$$
\hat G_n(\overline{B( F_{n+1}(x), 2\delta)}) \supset \hat G_n(\overline{B(F_{n+1} (y), \delta)}) \cup \hat G_n(\overline{B( F_{n+1}(x), \delta)})
$$
and contains both points $h(x)$ and $h( y)$. Since $\diam (\hat G_n(\overline{B(F_{n+1} (x)), 2 \delta)})< \tilde \vep$
this shows that $d(h(x), h(y))< \tilde \vep$, and implies on the continuity of $h$.

As we proved that there exists exactly one point $h(x)$ whose $\mathcal G$-orbit that $\delta$-shadows the
$\mathcal F$-orbit of a point $x \in X$, exchanging the roles of $\mathcal F$ and $\mathcal G$, one can use
the same argument as in the proof of Proposition~\ref{thm:C} to
assure that there exists a unique point $h^{-1}(h(x))=x$ that $\delta$-shadows the
$\mathcal G$-orbit of the point $h(x) \in X$ and, consequently, to deduce that $h$ is a homeomorphism.
It is immediate that $h \to id$ as $\mathcal G \to \mathcal F$. This finishes the proof of the proposition.
\end{proof}

We observe that the stability notions in the statement of Propositions~\ref{thm:C} and~\ref{thm:quasi-conexp} are unrelated, thus these cannot be obtained one from each other.  One of the advantages of Proposition~\ref{thm:C} is to observe
that time-dependent conjugacies become smaller as time evolves for sequences that are asymptotic. An advantage
of Proposition~\ref{thm:quasi-conexp} is to obtain quasi-conjugacies and to compute the proximity of the quasi-conjugacy
from the identity in terms of contracting rates for the sequence, which is the best one can hope computationally.
Although quasi-conjugacies need not unique, this is the case for stably expansive sequences $\cF$:
if $\vep$ is an expansiveness constant for all sequences $\mathcal G$ arbitrarily close to $\cF$, $h$ is a $\beta$-quasi-conjugacy
between $\cF$ and $\cG$ with $0<\beta<\vep/4$
and $\tilde h:X \to X$ is a homeomorphism satisfying $d_{C^0}(h,\tilde h) \ll \vep/4$ then
$$
d_{C^0} (\tilde h\circ F_n , G_n \circ \tilde h)
	\geq
	d_{C^0} (G_n \circ h , G_n \circ \tilde h)
	- d_{C^0}(h, \tilde h)
	-
	\beta
	\geq
	d_{C^0} (G_n \circ h , G_n \circ \tilde h) - \vep/2
$$
which is larger $\vep /3$ for some $n\in \mathbb Z_+$ (by the $\vep$-expansiveness of the sequence $\cG$).

%%%%%%%%%%%%%%
\subsection{Shadowing and stability for Anosov sequences}

In this section we prove shadowing and stability for sequences of Anosov diffeomorphisms
as stated in Theorem~\ref{thm:Ainvertivel}.

%%%%%%
\subsubsection{Invariant manifolds}\label{sec:geometric}

Assume that $\cF=\{f_{n}\}_{n\in \mathbb Z}$ is an Anosov sequence (recall Subsection~\ref{subsec:shadow}
for the definition). Recall the notations
$\cF^{(k)}=\{f_{k+n}\}_{n\in \mathbb Z}$, $F_{-n}^{(k)}=f_{k-n}^{-1}\circ \dots \circ f_{k-2}^{-1} \circ f_{k-1}^{-1}$
and $F_n^{(k)}=f_{n+k-1} \circ \dots \circ f_{k+1}\circ f_k$, for all $k,n\in \mathbb Z$. The existence of invariant
cone fields with uniform expansion and contraction imply on the following properties:
\begin{enumerate}
\item for every $k\in \mathbb Z$ and $x\in X_k$, the subspaces
	$$
	E_{\cF^{(k)}}^u(x) := \bigcap_{n \ge 0} D F^{(k-n)}_{n}  (F^{(k)}_{-n}(x)) \; \mathcal C^u_{a,k-n}(F^{(k)}_{-n}(x))
		\subset T_x X_k
	$$
	and
	$$
	E_{\cF^{(k)}}^s(x) := \bigcap_{n \ge 0} D F^{(k+n)}_{-n}  (F^{(k)}_{n}(x)) \; \mathcal C^s_{a,k+n}(F^{(k)}_{n}(x))
		\subset T_x X_k
	$$
	satisfy the invariance conditions
	$Df_k (x) E_{\cF^{(k)}}^*(x)= E_{\cF^{(k+1)}}^*(f_k(x))$ for $*\in \{s,u\}$.
\item there are constants $C>0$, $\delta_1>0$ and $\tilde \lambda \in (\lambda, 1)$ so that,
	for any $x\in X_k$, $k\in \mathbb Z$ and $*\in \{s,u\}$ there exists a unique smooth submanifold
	$\cW^*_{\cF^{(k)},\delta_1}(x)$ of $X_k$ (of size $\delta_1$)
	that is tangent to the subbundle $E_{\cF^{(k)}}^*(x)$ at $x$,
	in such a way that:
	\begin{itemize}
	\item[(i)] $f_k(\cW^*_{\cF^{(k)},\delta_1}(x))= \cW^*_{\cF^{(k+1)},\delta_1}(f_k(x))$
	\item[(ii)] $d_{\cW^s} ( f_k(y), f_k(z) ) \le \tilde \lambda\,  d_{\cW^s} ( y,z)$ for all
		$ y,z \in \cW^s_{\cF^{(k)},\delta_1}(x)$
	\item[(iii)] $d_{\cW^u} ( f_k^{-1}(y), f_k^{-1}(z) ) \le \tilde \lambda\, d_{\cW^u} ( y,z)$ for all
	$y,z \in \cW^u_{\cF^{(k+1)},\delta_1}(x)$
	\item[(iv)] the angles between stable and unstable bundles $E^s_{\cF^{(k)}}(x)$ and $E^u_{\cF^{(k)}}(x)$
	is bounded away from zero by some constant $\theta_k>0$
	\item[(iv)] for any $0<\vep<\delta_1$ there exists $\delta_k>0$ so that for any $x,y\in X_k$ with $d(x,y)<2\delta_k$
	the transverse intersection $\cW^s_{\cF^{(k)},\vep}(x) \pitchfork \cW^u_{\cF^{(k)},\vep}(y)$ consists of a unique point in $X_k$
	\item[(v)] $\cW^s_{\cF^{(k)},\vep}(x)=\{y\in X_k \colon d(F_n^{(k)}(y),F_n^{(k)}(x)) \le \vep \;\text{for every}\; n\ge 0\}$ and \\
	$\cW^u_{\cF^{(k)},\vep}(x)=\{y\in X_k \colon d(F_{-n}^{(k)}(y),F_{-n}^{(k)}(x)) \le \vep \;\text{for every}\; n\ge 0\}$
	\end{itemize}
\end{enumerate}
Item (1) follows from the existence of the strictly invariant cone fields. Item (2) follows from
the existence of stable and unstable manifolds for sequences of maps, using the graph transform method (cf. \cite[Section~7]{BP05}
or \cite{Acevedo}).
Here we opted to write the invariant manifolds as $\cW^*_{\cF^{(k)}}$ ($*\in\{s,u\}$) to specify the
shifted sequence $\cF^{(k)}$ with respect to which the uniform contracting or expanding behavior holds.

In the case that  $f\in \text{Diff}^{\,1}(X)$ is an Anosov diffeomorphism there exists a $C^1$-small open neighborhood $\cU$ of
$f$ so that the cone fields $\mathcal C^u_{a_0}$ and $\mathcal C^s_{a_0}$ (determined by $f$) are $Dg$-invariant
for all $g\in \cU$, and that $\lambda:=\sup_{g\in \cU} \lambda_g <1$. Furthermore,
there are constants $\theta,\delta,\vep>0$ (depending only on $f$
and $\cU$) so that any sequence $\mathcal F=\{f_n\}_{n\in \mathbb Z}$ by elements of $\cU$ is such that
$\theta< \theta_n$, $\delta < \delta_n$ and $\vep<\vep_n$ for all $n$. In other words, both the angles between stable and unstable subspaces and
the sizes given by local product structure are uniformly bounded away from zero.
In consequence, the sequence $\cF$ is $\delta_1$-expansive: if $d(F_n(x),F_n(y))<\vep$ for all $n\in \mathbb Z$ then $x\in
\cW^s_{\cF,\delta_1}(y) \cap \cW^u_{\cF,\delta_1}(y) =\{y\}$.

%%%%%%%%%%
\subsubsection{Proof of Theorem~\ref{thm:Ainvertivel} }

Let $f$ be a $C^1$ Anosov diffeomorphism, let $\cU$ be a $C^1$-open neighborhood of $f$ as above,
and let $\mathcal F=\{f_n\}_{n\in \mathbb Z}$ be a sequence of Anosov diffeomorphisms with $f_n\in \cU$ for all $n\in \mathbb Z$.
In particular  $\lambda:=\sup_{n\in\mathbb N} \lambda_n <1$.
Let $\delta_1>0$ and $\tilde\lambda\in (\lambda,1)$ be given by items (1) and (2) above.

Fix $\beta >0$. We claim that there exists $\zeta>0$ such that every $\zeta$-pseudo orbit for $\cF$ is $\beta$-shadowed.
Take $0<\vep < (1-\tilde \lambda) \frac\beta2<\frac\beta2$ and let $0<\delta<\vep$ be given by the local product structure (cf. item (2) (iv)):
$$
d(x,y)<2\delta \quad \Rightarrow \quad \cW^s_{\cF^{(k)},\vep}(x) \pitchfork \cW^u_{\cF^{(k)},\vep}(y)
 	\;\text{consists of a unique point.}
$$
Take $N\ge 1$ such that $\tilde \lambda^N \vep <\delta\slash 2$.
Since  $\sup_{g\in \cU} \sup_{x\in X} \|Dg(x)\|<\infty$, the mean value inequality ensures the
equicontinuity of the elements of the sequence $\cF$. Thus,
there exists $0<\zeta<\delta$ such that:
\begin{enumerate}
\item[(a)]  if $(z_n)_{n\ge 0}$ is a $\zeta$-pseudo orbit and $k\ge 0$ then the finite pseudo orbit $(z_{n+k})_{n= 0}^N$
is such that
$
d(F^{(k)}_j(z_k),z_{k+j})<\frac\delta2 \quad\text{for all}\; k \in \mathbb Z \;\text{and}  \; |j|\le N
$
(cf. \cite[Theorem 3.4 ]{DR14}).
\item[(b)]
if $d(x,y)<\zeta$ then
$
d(F^{(k)}_j(x),F^{(k)}_j(y))<\frac\delta2 \quad\text{for all}\; k \in \mathbb Z \;\text{and}  \; |j|\le N
$
(by the mean value inequality).
\end{enumerate}

First we prove that the shadowing property holds for finite pseudo-orbits.
For that, we may assume without loss of generality that the pseudo-orbits $(x_n)_{n=0}^k$ are formed by
a number $k$ of points that is a multiple of $N$ (otherwise just consider the extended pseudo-orbit
$(x_n)_{n=0}^{(j+1)N}$, where $x_n=F^{(k)}_{n-k}(x_k)$ for every $k+1\le n \le (j+1)N$ and $j\in \mathbb Z_+$
is uniquely determined by $jN < k \le (j+1)N$).

Fix $p\in\mathbb N$ arbitrary and let $(x_n)_{n=0}^{pN}$ be a $\zeta$-pseudo-orbit for $\cF$.
We claim that $(x_n)_{n=0}^{pN}$ is $\beta$-shadowed by some point $x\in X$.
We define recursively $y_0=x_0$ and
\begin{equation}\label{eq:constyn}
y_{n+1} \in
	\mathcal W_{\cF^{((n+1)N)},\vep}^s(x_{(n+1)N})
	\pitchfork
	\mathcal W_{\cF^{((n+1)N)},\vep}^u(F^{(nN)}_{N}(y_{n}))
\end{equation}
for every $0\le n\le p-1$ (cf. Figure~\ref{fig44} below).
\begin{figure}[htb]\label{fig44}
\begin{center}
  \includegraphics[width=12cm,height=3.6cm]{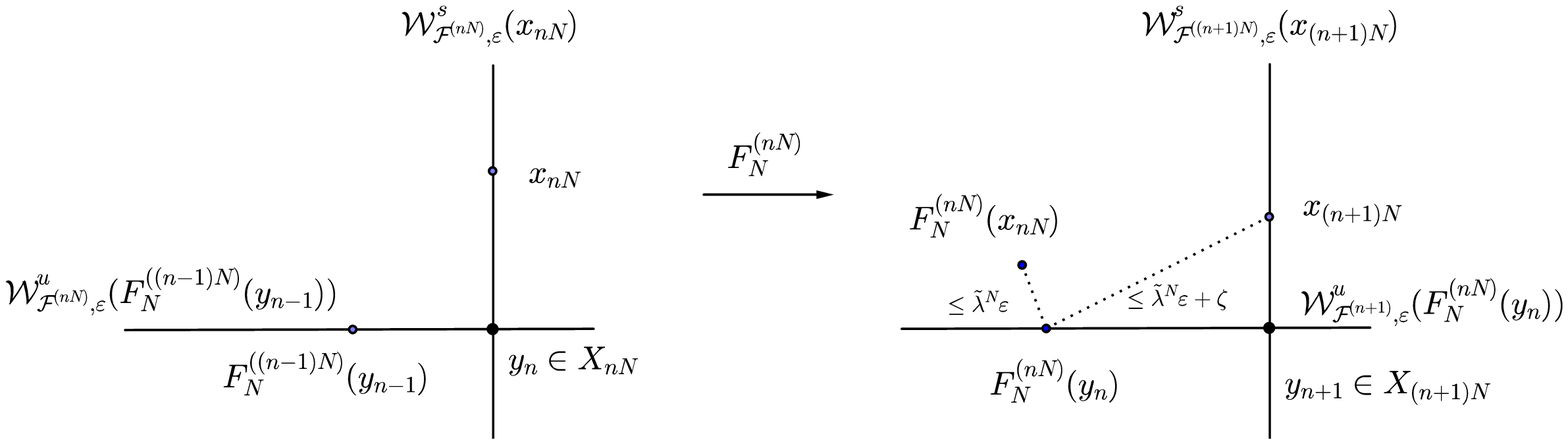}
\caption{Construction of homoclinic points}
\label{figure}
\end{center}
\end{figure}
Note that the homoclinic points $(y_n)_{n=0}^p$ as above are well defined.
Indeed, as $y_{n}\in\mathcal W_{\cF^{(nN)},\vep}^s(x_{nN})$ we have that
$$
	 d_{\cW^s}(F^{(nN)}_{N}(y_{n}), F^{(nN)}_{N}(x_{nN}))
	\le \tilde \lambda^N\, d_{\cW^s}( y_{n}, x_{nN})
	\le \tilde \lambda^N  \, \vep <\frac\delta2.
$$
In addition, as $(x_k)_k$ is a $\zeta$-pseudo-orbit,
by triangular inequality we conclude that
$$
d(x_{(n+1)N},F_{N}^{(nN)}(y_{n}))\leq d(F_{N}^{(nN)}(y_{n}),F_{N}^{(nN)}(x_{nN}))+d(F_{N}^{(nN)}(x_{nN}),x_{(n+1)N})<\delta
$$
for every $0\le n \le p$.
 This guarantees that the homoclinic point $y_{n+1}$ is well defined,
as claimed.
Now we prove that the point $x=(F_{pN})^{-1}(y_{p})=F_{-pN}^{(pN)}(y_{p}) \in X_0$ is such that its orbit $\beta$-shadows
the pseudo-orbit
$(x_n)_{n=0}^{pN}$. By construction  $y_{n}\in	\mathcal W_{\cF^{(nN)},\vep}^u(F^{((n-1)N)}_{N}(y_{n-1}))$ for all $0\le n\le p$ (recall ~\eqref{eq:constyn}). Thus, by invariance of the unstable leaves and backward contraction we get
\begin{align*}
d( F^{(nN)}_{-N} (y_{n}), y_{n-1})
            &=d((F^{(nN)}_{N})^{-1} (y_{n}),(F^{(nN)}_{N})^{-1}(F^{(nN)}_{N}(y_{n-1})))\\
            &\le \tilde \lambda^N d( y_{n}, F^{((n-1)N)}_{N}(y_{n-1}))
            \leq \tilde \lambda^{N}\vep
\end{align*}
for every $0\le n\le p$.
In particular it follows that
\begin{align*}
d(F_{nN}(x) , x_{nN} )
		& = d(F_{nN}(F_{pN}^{-1} (y_p)) , x_{nN} )
		 = d((F^{(nN)}_{(p-n)N})^{-1} (y_p) , x_{nN} ) \\
		& \le d(y_n , x_{nN} )
		+ \sum_{\ell=1}^{p-n}  \; d(\, F^{((n+\ell)N)}_{-\ell N} (y_{n+\ell}) , F^{((n+\ell-1)N)}_{-(\ell-1)N} (y_{n+\ell-1}) \,) \\
		& \le  \sum_{\ell=0}^{p-n}  \tilde \lambda^{\ell N} \vep
		\le \frac{\vep}{1-\tilde \lambda^N} < \frac{\vep}{1-\tilde \lambda} <\frac\beta2
\end{align*}
 for every $0\le n \le p$.
By equicontinuity, the choice of $0<\delta<\frac{\beta}2$ and consequence (b) of the equicontinuity
for the elements of $\cF$,
for every $0\le n \le p$ and $0\le j<N$
\begin{align*}
d(F_{j+nN}(x) , x_{j+nN} )
		& \le d(F_{j} (F_{nN}(x)) , F_j(x_{nN}) )
		 + d(F_j(x_{nN}), x_{j+nN} )
		 < \beta
\end{align*}
It follows that the sequence $(x_n)_{n=0}^{p}$ is $\beta$-shadowed
by the point $x$ and so the shadowing property follows for finite pseudo-orbits. Since $\delta>0$ is independent of $p\in\N$, a simple argument using compactness assures that any infinite $\zeta$-pseudo-orbit is $\beta$-traced by some point in $X$.
Finally, if $x$ and $y$ are two points that $\beta$-shadow  the same $\zeta=\zeta(\beta)$-pseudo orbit, then
\[
d(F_j(x),F_j(y))\leq d(F_j(x),x_j)+d(F_j(y),x_j)<2\beta<2\delta_1
\]
for all $j\in \mathbb Z$ and, by the choice of $\delta_0$, we conclude $x=y$. This proves item (1) in the theorem.
In order to deduce the Lipschitz shadowing property holds we determine a linear dependence between
the constants $\zeta$ and $\beta$. As described above, given $\beta>0$ take  $\vep=(1-\tilde \lambda)\beta/2>0$. Since the
angle between stable and unstable bundles for the sequential dynamical system is bounded away from zero and local stable and
unstable disks have bounded curvature and are tangent to the stable bundles then the local product structure property
holds: there exists $L>0$ so that if $d(x,y)<\frac{2\vep}L$ then $\cW^s_{\cF^{(k)},\vep}(x)$ and $\cW^u_{\cF^{(k)},\vep}(y)$
intersects at a unique point. Taking $\delta=\vep/L$ and $N\ge 1$ so that  $\tilde\lambda^N \vep <\delta/2$
 then we conclude that one can take $\zeta= \frac{1-\tilde \lambda}{8LM^N} \beta>0$ (with $M:=\sup_{g\in \cU} \|Dg\|_0$),
where the constant $K:=(\frac{1-\tilde \lambda}{8LM^N})^{-1}>0$ depends only on $f$.
This proves the Lipschitz shadowing property.

\medskip

We proceed to prove (2). Let $U$ is a $C^1$-small neighborhood of $f$ and $K>0$ be as above.
Assume that $|\| \cF -\cG\||<\zeta$ is small and $x\in X$.
We can find a unique point $y$ whose $\mathcal F-$orbit
$K\zeta$-shadows the $\mathcal G-$orbit of $x$. This is assured by item (1) taking into account that
$( G_j(x))_{j\in \mathbb Z}$ is a $\zeta$-pseudo-orbit for $\mathcal F$.
Thus there exists a unique point $y=:h_{\cF,\cG}(x)$ that $K\zeta$-shadows $( G_j(x))_{j\in \mathbb Z}$, i.e.,
$d(F_j(h(x)),G_j(x))<K \zeta$ for all $j\in\mathbb Z$.

\smallskip

We prove first that $h=h_{\cF,\cG}$ is an homeomorphism.
First we prove that the map $h=h_{\cF,\cG}$ is continuous.
On the one hand,
the uniqueness of the shadowing point assured by item (1) implies that
\[
h(x)=\bigcap_{j=-\infty}^{\infty}F_{j}^{-1}(B_{K\zeta}(G_j(x))),
\]
where $B_\vep(z)$ denotes the ball of radius $\vep>0$ around $z$ in $X$. In Particular, $d(h(x),x)<K\zeta$.
By hyperbolicity, the diameter of
the finite intersection $\bigcap_{j=-N}^{N}F_{j}^{-1}(B_{K\zeta}(G_j(x)))$
tends to zero (uniformly in $x$) as $N$ tends to infinity.
It implies that for $z$ sufficiently close to $x$, the points in the finite intersection
$\bigcap_{j=-N}^{N} F_{j}^{-1}(B_{K \zeta} (G_j(z))$ lie in a small ball around $h(x)$, which proves the
continuity of $h$.
To check that $h$ is injective, if $h(x_1)=h(x_2)$ then
\[
d(F_j(h(x_i)),G_j(x_i))<K \zeta \quad\text{ and }\quad F_j(h(x_1))=F_j(h(x_2))
\]
for every $j\in \mathbb Z$ and $i=1,2$.
by triangular inequality we obtain $d(G_j(x_1),G_j(x_2))<2K \zeta$, for all $j\in\mathbb Z$. If $2K \zeta$ is smaller than the
constant expansiveness of $\mathcal G$ then we conclude that $x_1=x_2$.
Now, using the fact that $h$ a continuous and one to one map on a compact and connected Riemannian manifold $X$ we get, by the invariance domain theorem, that $h$ is an open map so $h(X)$ is open. As $X$ is compact, $h(X)$ is compact and, in particular, is closed.
Altogether, this implies that $h(X)$ is a connected component of $X$, hence $h(X)=X$.
Finally, using the fact that $X$ is compact and $h$ is a continuous  one to one map on $X$ we obtain that $h$ is an homeomorphism.
The proof of \eqref{eq:conjugacyn} is entirely analogous to the one of Proposition~\ref{thm:C}.

This concludes the proof of the theorem. \hfill $\square$

%%%%%%%%%
\section{Ergodic stability of convergent sequences}\label{sec:cors}

\subsection{Proof of Theorem~\ref{thm:asymptinv}}

Assume that $f$ is a $C^1$-transitive Anosov diffeomorphism on a compact Riemannian manifold $X$ and let
$\mathcal F=\{f_n\}_n$ be a sequence of expanding maps so that $\lim_{n\to\infty} \|f_n-f\|_{C_1}=0$.
Proposition~\ref{thm:C} assures that there exist $L>0$ and homeomorphisms $h, h_n \in Homeo(X)$ so that
\begin{itemize}
\item[(i)] $h_n \circ f^n    = F_n \circ h$ for every $n\in \mathbb Z_+$; \vspace{.15cm}
\item[(ii)] $d_{C^0}(h_n,id) \le L |\| \cG^{(n)} - \cF^{(n)}\|| \to 0$ as $n$ tends to infinity.
\end{itemize}
Hence, for any continuous observable $\phi: X \to \mathbb R$
we get that the continuous observables $\phi_j := \phi\circ h_j$ are uniformly convergent to $\phi$ as $j\to\infty$.
Moreover, given $x\in X$,
\begin{align}
\Big| \frac1n \sum_{j=0}^{n-1} \phi (F_j( x ))
	 -  \frac1n \sum_{j=0}^{n-1}  \phi (f^j( h^{-1}(x) )) \Big|
	 & = \Big| \frac1n \sum_{j=0}^{n-1}  \phi_j (f^j( h^{-1}(x) ))
	 -  \frac1n \sum_{j=0}^{n-1}  \phi (f^j( h^{-1}(x) )) \Big|  \nonumber\\
 	& \le
  	  \frac1n \sum_{j=0}^{n-1}   \| \phi_j -\phi\|_{C^0} \label{eq:C0est}
\end{align}
which tends to zero as $n\to\infty$.
In consequence, if $\mu$ is $f$-invariant and $\phi \in C^0(X)$ then it follows from
from Birkhoff's ergodic theorem that for $(h_*\mu)$-almost every $x \in X$
 one has that $\lim_{n\to\infty} \frac1n \sum_{j=0}^{n-1} \phi (F_j( x ))$ exists
and $\int \tilde \phi \, d\mu= \int \phi \, d\mu$.
In particular if $\mu$ is $f$-invariant and ergodic then
$\lim_{n\to\infty} \frac1n \sum_{j=0}^{n-1} \delta_{F_j( x )}=\mu$
for $(h_*\mu)$-almost every $x \in X$.
This proves items (1) and (2).

We are left to prove that the Birkhoff irregular set of $\phi$ whith respect to $\cF$ has full topological entropy and
ir a Baire residual subset of $X$. Estimate \eqref{eq:C0est} implies that $x\in I_{f,\phi}$ if and only if $h^{-1}(x)\in I_{\cF,\phi}$.
In other words, $h( I_{f,\phi})=I_{\cF,\phi}$.

In particular if $\mu$ is $f$-invariant and ergodic then
$\lim_{n\to\infty} \frac1n \sum_{j=0}^{n-1} \delta_{F_j( x )}=\mu$
for $(h_*\mu)$-almost every $x \in X$.
This proves items (1) and (2).

Now assume that $\phi$ is not cohomogous to a constant with respect to $f$.
We are left to prove that the Birkhoff irregular set of $\phi$ whith respect to $\mathcal F$ has full topological entropy and
ir a Baire residual subset of $X$. Estimate \eqref{eq:C0est} implies that $x\in I_{f,\phi}$ if and only if $h^{-1}(x)\in I_{\cF,\phi}$.
In other words, $h( I_{f,\phi})=I_{\cF,\phi}$.
Using that $I_{f,\phi}$ is a Baire residual subset of $X$ (cf. \cite{BLV}) and that $h$ is a homeomorphism we conclude that $I_{\cF,\phi}$ is a Baire residual subset.

We are left to prove that $h_{I_{\cF,\phi}}(\cF)=h_{top}(\cF)$. We may assume without loss of generality that
$f$ satisfies the specification property (otherwise just take $f^k$ for some $k> 1$). In particular, $\phi$ is not cohomologous to
a constant if and only if $I_{f,\phi}\neq \emptyset$. Moreover, if $\alpha\neq \beta$ are accumulation points of the set
$\{\frac1n\sum_{j=0}^{n-1}\phi (f^j(x) \colon x\in X, \; n\ge 1\}$ and
$$
I_{f,\phi}(\alpha,\beta)= \Big\{x\in X \colon \underline{\lim}_{n\to\infty}\frac1n\sum_{j=0}^{n-1}\phi (f^j(x)=\alpha
				< \beta =\overline{\lim}_{n\to\infty}\frac1n\sum_{j=0}^{n-1}\phi (f^j(x) \Big\}
$$
then
$h_{I_{f,\phi}(\alpha,\beta)}(f) =h_{top}(f)$ (cf. \cite{Thompson}).
 We need the following:

\begin{lemma}\label{le:entropy}
Let $\cF=\{f_n\}_n$ be a sequence of $C^1$-diffeomorphisms convergent to $f$ and $Z\subset X$. Then $h_{Z}(\cF) =h_{Z}(f)$.
\end{lemma}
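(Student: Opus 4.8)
The plan is to transport the computation of entropy from the non-autonomous system $\cF$ to the autonomous system $f$ through the sequential conjugacy furnished by Proposition~\ref{thm:C} (equivalently Theorem~\ref{thm:Ainvertivel}), applied to $\cF$ and the constant sequence $\cG=\{f\}_n$. Since $f_n\to f$ in the $C^1$ topology, every tail $\cF^{(k)}$ eventually lies in a fixed $C^1$-neighbourhood $\cU$ of $f$, so (discarding finitely many terms) we obtain homeomorphisms $(h_k)_{k}$ with
\[
h_k\circ f^k=F_k\circ h_0\qquad(k\in\mathbb{Z}_+),
\]
equivalently $F_k=h_k\circ f^k\circ h_0^{-1}$, and with $d_{C^0}(h_k,\mathrm{id})\le L\,\sup_{m\ge k}\|f_m-f\|_{C^1}\to0$. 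In particular $(h_k)_k$ and $(h_k^{-1})_k$ are equicontinuous, i.e. $\cF$ and $\{f\}$ are equiconjugate in the sense of Kolyada--Snoha.

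First I would record the transfer of $(n,\vep)$-separated sets. Writing $F_j=h_j\circ f^j\circ h_0^{-1}$ and using the uniform equicontinuity of $(h_j)_j$ and $(h_j^{-1})_j$, for every $\vep>0$ there is $\vep'>0$, with $\vep'\to0$ as $\vep\to0$, such that $\max_{0\le j<n}d(f^j u,f^j v)\le\vep'$ forces $\max_{0\le j<n}d(F_j(h_0u),F_j(h_0v))\le\vep$, and symmetrically with the roles of $\cF$ and $f$ interchanged. Hence $h_0$ carries $(n,\vep')$-separated sets for $f$ inside $h_0^{-1}(Z)$ to $(n,\vep)$-separated sets for $\cF$ inside $Z$, and conversely, so that
\[
s_n(f,\vep',h_0^{-1}(Z))\ \le\ s_n(\cF,\vep,Z)\ \le\ s_n(f,\vep'',h_0^{-1}(Z)).
\]
Taking $\tfrac1n\log$, then $\limsup_{n}$, and finally $\vep\to0$ yields the intermediate identity $h_Z(\cF)=h_{h_0^{-1}(Z)}(f)$. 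For $Z=X$ this already gives $h_{top}(\cF)=h_{top}(f)$, since $h_0^{-1}(X)=X$, recovering Proposition~\ref{propKS}.

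The main obstacle is the remaining identification $h_{h_0^{-1}(Z)}(f)=h_Z(f)$ for arbitrary $Z$, that is, showing that the base conjugacy $h_0$ does not alter the $f$-entropy of a set. A useful structural tool here is the invariance of this entropy under forward iteration of the set, $h_{f(A)}(f)=h_A(f)$ for every $A$: an $(n,\vep)$-separated set of $f(A)$ is an $(n{+}1,\vep)$-separated set of $A$ up to the single zeroth coordinate, which changes the cardinality only by a factor bounded independently of $n$; iterating gives $h_{f^J(A)}(f)=h_A(f)$ for all $J\ge0$. Using $F_J=h_J\circ f^J\circ h_0^{-1}$ together with the fact that $h_J\to\mathrm{id}$ uniformly, one may rephrase the target as the assertion that the discrepancy between $Z$ and its image under a homeomorphism that is asymptotically the identity on the tails does not affect the exponential growth rate of separated sets inside $Z$. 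I expect this to be the delicate heart of the argument, and the place where one genuinely needs uniform hyperbolicity rather than a crude estimate: a $C^0$-bound is insufficient, because even a near-identity homeomorphism distorts long Bowen balls by an amount that can be exponential in $n$. The route I would pursue is to exploit the $C^1$-proximity of the stable and unstable foliations of $\cF^{(J)}$ to those of $f$ and the uniform local product structure of Subsection~\ref{sec:geometric}, so as to compare the two families of $(n,\vep)$-separated sets at small scales $\vep$ with constants uniform in $n$; controlling this comparison uniformly is precisely what would close the gap and yield $h_Z(\cF)=h_Z(f)$.
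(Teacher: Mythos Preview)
Your approach is exactly the paper's: use $F_j = h_j \circ f^j \circ h_0^{-1}$ together with $h_j \to \mathrm{id}$ in $C^0$ to transfer $(n,\vep)$-separated sets between $f$ and $\cF$. The paper's argument is precisely the first half of what you wrote: it picks $N_\vep$ with $d_{C^0}(h_j,\mathrm{id})<\vep/3$ for $j\ge N_\vep$, observes that $d(f^jx,f^jy)>\vep$ forces $d(F_j(h_0 x),F_j(h_0 y))>\vep/3$ (and the symmetric implication), and concludes directly. The displayed inequalities in the paper are written with $X$ in place of $Z$, and the paper does not address the point you isolate --- that this comparison naturally relates $s_n(f,\cdot,Z)$ to $s_n(\cF,\cdot,h_0(Z))$ and hence delivers only your intermediate identity $h_Z(\cF)=h_{h_0^{-1}(Z)}(f)$.

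So the difficulty you flag --- passing from $h_{h_0^{-1}(Z)}(f)$ to $h_Z(f)$ for an \emph{arbitrary} $Z$ --- is not resolved in the paper either; the paper simply does not engage with it, and your speculative route through the invariant foliations has no counterpart there. In the paper's actual application the issue evaporates: the lemma is invoked only for $Z=I_{\cF,\phi}=h_0(I_{f,\phi})$, so $h_0^{-1}(Z)=I_{f,\phi}$ is known explicitly and is independently shown to carry full $f$-entropy. Thus the chain
\[
h_{I_{\cF,\phi}}(\cF)=h_{h_0^{-1}(I_{\cF,\phi})}(f)=h_{I_{f,\phi}}(f)=h_{\mathrm{top}}(f)
\]
already suffices for what the paper needs, and the general-$Z$ formulation is stronger than what either the paper's written proof or your argument establishes. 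In short: you have reproduced the paper's proof and, in addition, correctly located a gap that the paper glosses over.
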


\begin{proof}
Note that $h_n \circ f^n    = F_n \circ h$ and that $(h_n)_n$ converges to the identity as $n\to\infty$.
For any $\vep>0$ there exists $N_\vep\ge 1$ so that $d_{C^0}(h_n,id)<\vep/3$ for all $n\ge N_\vep$. In consequence, if
$d(f^j(x),f^j(y))>\vep $ then $d(F_j (h(x)), F_j(h(y)))>\vep/3.$ This proves that if $E\subset X$ is a $(n,\vep)$-separated
set for $f$ then $h(E)$ is a $(n,\vep/3)$-separated set for $\cF$, and that
$$
s_n(\cF, \vep/3, X) \ge s_n(f, \vep, X)
$$
for every $n\ge N_\vep$. Reciprocally, if $d(F_j (x), F_j(y))>\vep$ for some $j\ge N_\vep$ then
$$
d(f^j(h^{-1}(x)),f^j(h^{-1}(y)))
	\ge d(F_j (x), F_j(y)) - \frac{2\vep}3 >\frac\vep{3}
$$
and so
$
s_j(\cF, \vep, X) \ge s_j(f, \vep/3, X).
$
The statement of the lemma is now immediate.
\end{proof}

Now, if $N\ge 1$ is large so that $\frac1n \sum_{j=0}^{n-1}   \| \phi_j -\phi\|_{C^0}< \frac{\beta-\alpha}2$ for every $n\ge N$ then
\eqref{eq:C0est} implies that
$$\underline{\lim}_{n\to\infty}\frac1n\sum_{j=0}^{n-1}\phi (F_j(x)
	< \overline{\lim}_{n\to\infty}\frac1n\sum_{j=0}^{n-1}\phi (F_j(x)
$$
for every $x\in I_{\cF,\phi,\phi}(\alpha,\beta)$. In other words, $I_{f,\phi}(\alpha,\beta)\subset I_{\cF,\phi}$ and, consequently,
$h_{I_{\cF,\phi}}(f) \ge h_{I_{f,\phi}(\alpha,\beta)}(f)= h_{top}(f)$. Finally, Lemma~\ref{le:entropy} implies that
$h_{I_{\cF,\phi}}(\cF) = h_{I_{\cF,\phi}}(f) = h_{top}(f)$. This completes the proof of the theorem.

%%%
\subsection{Proof of Theorem~\ref{thm:ASIP}}

Let $\phi: X \to \mathbb R$ be a mean zero $\alpha$-H\"older continuous and let $|\phi|_\alpha$ denote the H\"older constant.
If $\phi_j := \phi\circ h_j$ then $\|\phi_j-\phi\|_{C^0} \le |\phi|_\alpha \|h_j - id\|_{C^0}^\alpha
	\le |\phi|_\alpha L^\alpha (\sup_{\ell\ge j}\| f_\ell - f \|_{C^1})^\alpha$. In other words,
$\|\phi_j-\phi\|_{C^0} \le |\phi|_\alpha L^\alpha a_j^\alpha$. In particular
\begin{align*}
\Big|  \sum_{j=0}^{n-1} \phi \circ F_j
	 -   \sum_{j=0}^{n-1}  \phi \circ f^j \circ  h^{-1} \Big|
 	& \le  \sum_{j=0}^{n-1}   \| \phi_j -\phi\|_{C^0}
	\le |\phi|_\alpha L^\alpha \sum_{j=0}^{n-1} a_j^\alpha
\end{align*}
and, consequently,
\begin{align*}
\sum_{j=0}^{n-1} \phi \circ F_j
	 =   \sum_{j=0}^{n-1}  \phi \circ f^j \circ  h^{-1}
	 + \mathcal O( n^{\frac12-\vep})
\end{align*}
provided that $a_j \le C j^{-(\frac{1}{2}+\vep)\frac1\alpha}$ for all $j\ge 1$.
Since $\phi$ is mean zero and H\"older continuous and $f$ is uniformly expanding then
the almost sure invariance principle follows from the corresponding one for uniformly expanding
dynamics with
$\sigma^2=\int \phi^2 \, d\mu + \sum_{n= 1}^{\infty} \phi \, (\phi\circ f^j) \,d\mu>0$. (see e.g. \cite{DP}).
This proves the theorem.

\subsection*{Acknowledgments:} This work was partially supported by CNPq-Brazil.
The authors are deeply grateful to T. Bomfim for some references and useful discussions.
\medskip

%%%%%%%%%%%%%%%%%%%%%%%%%%%%%%%%%%%%%%%%%%%%%%
\bibliographystyle{alpha}
%\bibliography{bib}

\end{document}